\DeclarePairedDelimiter\floor{\lfloor}{\rfloor}
\newtheorem{thm}{Theorem}[section]
\newtheorem{lem}[thm]{Lemma}
\theoremstyle{definition}
\theoremstyle{remark}
\newtheorem{rem}[thm]{Remark}
\let\c@equation\c@thm
\numberwithin{equation}{section}
\title[Biharmonic Schrödinger equation on the half-line]{The initial-boundary value problem for the biharmonic Schrödinger equation on the half-line }
\author{T. Özsarı\textsuperscript{$\star$} \& N. Yolcu\textsuperscript{$\dagger$}}
\address{Department of Mathematics, Izmir Institute of Technology, Urla, Izmir, TURKEY}
\thanks{\textsuperscript{$\star$}Correspondence: Türker Özsarı, Department of Mathematics, Izmir Institute of Technology, Urla, Izmir 35430, TURKEY; E-mail: turkerozsari@iyte.edu.tr}
\thanks{\textsuperscript{$\dagger$}The results of this paper will also appear as part of the Ph.D. thesis of Nermin Yolcu at Izmir Institute of Technology.}
\thanks{This research was funded by TÜBİTAK 1001 Grant \#117F449}
\date{}
\begin{document}

\begin{abstract}
We study the local and global wellposedness of the initial-boundary value problem for the biharmonic Schrödinger equation on the half-line with inhomogeneous Dirichlet-Neumann boundary data. First, we obtain a representation formula for the solution of the linear nonhomogenenous problem by using the Fokas method (also known as the \emph{unified transform method}). We use this representation formula to prove space and time estimates on the solutions of the linear model in fractional Sobolev spaces by using Fourier analysis. Secondly, we consider the nonlinear model with a power type nonlinearity and prove the local wellposedness by means of a classical contraction argument. We obtain Strichartz estimates to treat the low regularity case by using the oscillatory integral theory directly on the representation formula provided by the Fokas method.  Global wellposedness of the defocusing model is established up to cubic nonlinearities by using the multiplier technique and proving hidden trace regularities.
\end{abstract}

\keywords{fourth-order Schrödinger equation; biharmonic Schrödinger equation; Fokas method; unified transform method; local wellposedness; global wellposedness; space estimates; time estimates; Strichartz estimates; \and inhomogeneous boundary data}
% \PACS{PACS code1 \and PACS code2 \and more}
\subjclass[2010]{35Q55, 35C15, 35A07, 35A22, 35G15, 35G30}
%\def\uppercasenonmath#1{} % this disables uppercasing title
%%% ----------------------------------------------------------------------
\maketitle
%%% ----------------------------------------------------------------------
\tableofcontents
%\tableofcontents

\section{Introduction}
This article studies the local and global wellposedness of the initial\,-\,(inhomogeneous) boundary value problem for the biharmonic nonlinear Schrödinger equation (NLS) which is posed on the right half-line:
\begin{align}
  &iq_{t}+\partial_x^{4}q=f(q), \quad (x,t)\in\mathbb{R}_+\times (0,T),\label{4th.1}\\
  &q(x,0)=q_{0}(x), \quad x\in\mathbb{R}_+,\label{4th.2}\\
  &q(0,t)=g_{0}(t), \quad  t\in(0,T),\label{4th.3} \\
  &q_{x}(0,t)=g_{1}(t), \quad  t\in(0,T),\label{4th.4}
\end{align} where $f(q)=\kappa|q|^pq$, $\kappa\in\mathbb{C}$, $T,p>0$, and $q$ is a complex valued function.  The analysis here is carried out in the $L^2-$based fractional Sobolev space $H^s(\mathbb{R}_+)$ at the spatial level, where throughout the paper (without any restatement) we will assume the following in order to work with a sufficiently nice nonlinearity:
\begin{itemize}
  \item[(a1)] if $s$ is integer, then $p\ge s$ if $p$ is an odd integer and $\floor{p}\ge s-1$ if $p$ is non-integer,
  \item[(a2)] if $s$ is non-integer, then $p>s$ if $p$ is an odd integer and $\floor{p}\ge \floor{s}$ if $p$ is non-integer.
\end{itemize}

The fourth-order NLS, in the form
\begin{equation}\label{mixeddisp}iu_{t}+\Delta u+\gamma \Delta^2u+|u|^pu=0,\,x\in \mathbb{R}^n,\,t\in\mathbb{R}\end{equation}
was introduced by \cite{Karpman96}-\cite{Karpman2000} to study the stabilizing role of the higher-order dispersive effects. It was shown that the solutions are stable if $\gamma<0$, $\displaystyle p\le\frac{4}{n}$ or if $\gamma\ll-1$, $\displaystyle p\in \left(\frac{4}{n},\frac{8}{n}\right)$. Moreover, solutions were found to be unstable for $\displaystyle\gamma<0$, $\displaystyle p\ge\frac{8}{n}$ in which case solutions may cease to exist globally.

In the absence of the Laplacian, the fourth order NLS takes the form \begin{equation}\label{biharmonic}iu_{t}+\gamma \Delta^2u+|u|^pu=0,\,x\in \mathbb{R}^n,\,t\in\mathbb{R}\end{equation} and it is called the biharmonic NLS. It was shown by \cite{Fin2002} (see also \cite{Fin2011} and the references therein) that all solutions of the biharmonic NLS are global if $\gamma>0$.  Moreover, it was found that $\displaystyle p=\frac{8}{n}$ is the critical exponent for singularity formation if $\gamma<0$, and smallness in the mean-square sense is sufficient for global existence if $\displaystyle p=\frac{8}{n}$.  The biharmonic NLS $$iu_{t}+\gamma \Delta^2u+\kappa|u|^pu=0,$$ with $\gamma,\kappa\in\mathbb{R}$, is said to be focusing (resp. defocusing) if $\gamma\kappa<0$ (resp. $\gamma\kappa>0$).

The rigourous analysis of the solutions of the fourth order Schrödinger equation started with the proof of sharp space-time decay properties for the linear group associated to the operator $i\partial_t+\lambda \Delta+\Delta^2$, where $\lambda\in \{-1,0,1\}$ \cite{Ben2000}.  One can actually use these properties to obtain Strichartz estimates, which gives the local wellposedness at $H^2$-level.

Local well-posedness of the nonlinear fourth order Schrödinger equations in one space dimension was studied in \cite{Seg04}, \cite{Hao2006}, and \cite{Zheng11}.  Global well-posedness in one dimensional case with small initial data was proved with various nonlinearities in \cite{Hayashi15}, \cite{Hayashi15-2}, \cite{Hayashi15-3}, \cite{Hayashi15-4}, and \cite{aoki16}.  Local well-posedness in the muti-dimensional case was treated in \cite{Hao2007} and the global well-posedness was studied in \cite{cui2007}, \cite{Pausader10}, \cite{guo2010}, \cite{guo12}, and \cite{Zhang2010}.  Global well-posedness at the $H^2-$level in the energy-critical case with power-type nonlinearities was shown by \cite{Pausader07} for radial initial data.  Global well-posedness and ill-posedness of the cubic defocusing biharmonic NLS was studied in \cite{Pausader09}.  It turns out that the cubic defocusing problem is ill-posed in dimensions $n\ge 9$, and well-posed in dimensions $n\le 8$, while the scattering holds true for dimensions $5\le n\le 8$. Other scattering results in the one dimensional scenario were obtained by \cite{Seg06} and \cite{Seg06-2}, while the high dimensional scattering problems  were studied in \cite{Pausader09}, \cite{Miao2009}, \cite{Miao2015}, \cite{Wang2012}, \cite{Ruz16}, \cite{Pausader13}, and \cite{Pausader07-2}. The last paper in particular proves the Levandosky-Strauss conjecture in the defocusing case. The blow-up phenomenon for the biharmonic NLS was studied in \cite{Fin2011}, \cite{Zhul2010}, \cite{Zhu11}, \cite{Zhu11-2}, \cite{dinh17}, and \cite{boul17}.

The references given above studied the fourth order Schrödinger equation in the whole Euclidean space, namely the spatial domain was assumed to be equal to $\mathbb{R}^n$. The absence of the boundary in these studies simplified the mathematical and physical analysis of the problem to some extent.  However, in order to boost the physical reality, it is common to assume that the evolution takes place in a region with boundary, and what happens at the boundary influences the nature of the solutions.  This is especially important for a control scientist, since boundary can be used as a control point, particularly when it is difficult or impossible to access the medium of the evolution.  This idea motivated some of the recent studies related with the controllability of the the solutions of the linear fourth order Schrödinger equation.  For instance, \cite{Wen16-2}, \cite{Wen14}, and \cite{Wen16} studied  the well-posedness and  exact controllability of the linear biharmonic Schrödinger equation on a bounded domain $\Omega\subset \mathbb{R}^n$.  Most recently, \cite{Aksas17} studied the stabilization of the linear biharmonic Schrödinger equation on a bounded domain with a locally supported internal damping.

From the physical point of view, the model under consideration in this paper corresponds to a situation in which the wave is generated from a fixed source such that the wave train moves into the medium in one specific direction. Wellposedness of similar inhomogenenous initial boundary value problems on the half-line were recently considered for the classical Schrödinger equation; see for example \cite{Carroll}, \cite{bu92}, \cite{bona}, \cite{holmer}, and \cite{fokas}. We prove the corresponding wellposedness theorems for the biharmonic Schrödinger equations, and as far as we know this is the first treatment of the fourth order Schrödinger equations subject to inhomogeneous boundary conditions.

\subsection{Main results}

In this paper, attention is given only to the biharmonic NLS. More general fourth order Schrödinger equations with mixed dispersion as in \eqref{mixeddisp} will be taken into consideration in a further study.  Our first main result is the local well-posedness of solutions in fractional Sobolev spaces. More precisely, we prove the following theorem.

\begin{thm}[Local wellposedness I] Let $T>0$, $s\in \left(\frac{1}{2},\frac{9}{2}\right)$, $s\neq \frac{3}{2}$, $p>0$, $q_0\in H^s(\mathbb{R_+})$, $\displaystyle g_0\in H^{\frac{2s+3}{8}}(0,T)$, $\displaystyle g_1\in H^{\frac{2s+1}{8}}(0,T)$, $q_0(0)=g_0(0)$, (also $q_0'(0)=g_1(0)$ if $s>\frac{3}{2}$).  Then, \eqref{4th.1}-\eqref{4th.4} has the following local wellposedness properties:
\begin{itemize}
  \item[(i)] Local existence and uniqueness:  there exists a unique local solution $q\in C([0,T_0];H^s(\mathbb{R_+}))$ for some ${T_0\in (0,T]}$,
  \item[(ii)] Continuous dependence:  if $B$ is a bounded subset of $H^s(\mathbb{R}_+)\times H^{\frac{2s+3}{8}}(0,T_0)\times H^{\frac{2s+1}{8}}(0,T_0)$, then there is $T_0>0$ such that the flow $(q_0,g_0,g_1)\rightarrow q$ is Lipschitz continuous from $B$ into $C([0,T_0];H^s(\mathbb{R_+}))$,
  \item[(iii)] Blow-up alternative: Let $S$ be the set of all $T_0\in (0,T]$ such that there exists a unique local solution in  $C([0,T_0];H^s(\mathbb{R_+}))$.  If $\displaystyle T_{max}:=\sup_{T_0\in S}T_0<T$, then $\displaystyle\lim_{t\uparrow T_{max}}\|q(t)\|_{H^s(\mathbb{R}_+)}=\infty.$
\end{itemize}
 \end{thm}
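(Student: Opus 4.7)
The plan is the classical Picard/contraction scheme, with the linear theory developed earlier in the paper (the Fokas representation formula together with the space and time estimates) supplying the off-the-shelf ingredients. Let $W(q_0,g_0,g_1)$ denote the unique solution to the linear problem \eqref{4th.1}-\eqref{4th.4} with $f\equiv 0$, and let $\Phi F := W(0,0,0)$ with the linear right-hand side $-iF$, i.e.\ the half-line Duhamel operator obtained by decomposing the forced linear problem into a homogeneous linear IBVP with inhomogeneous source. Using the linear theorems, one has space bounds of the form
\[
\|W(q_0,g_0,g_1)\|_{C([0,T];H^s(\mathbb{R}_+))}\lesssim \|q_0\|_{H^s(\mathbb{R}_+)}+\|g_0\|_{H^{\frac{2s+3}{8}}(0,T)}+\|g_1\|_{H^{\frac{2s+1}{8}}(0,T)},
\]
and, by running the same estimates on the Duhamel piece with time cutoffs and absorbing a small power of the time length, one obtains
\[
\|\Phi F\|_{C([0,T_0];H^s(\mathbb{R}_+))}\lesssim T_0^{\alpha}\,\|F\|_{C([0,T_0];H^s(\mathbb{R}_+))}
\]
for some $\alpha>0$, provided $s\in\bigl(\tfrac12,\tfrac92\bigr)\setminus\{\tfrac32\}$ so the relevant trace embeddings and temporal regularity exponents $\tfrac{2s+3}{8},\tfrac{2s+1}{8}$ are admissible and the compatibility conditions $q_0(0)=g_0(0)$ (and $q_0'(0)=g_1(0)$ when $s>\tfrac32$) ensure $q_0,g_0,g_1$ can be fit into the linear solution class.

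Next I would carry out the nonlinear estimate in $H^s(\mathbb{R}_+)$. Since $s>\tfrac12$, $H^s(\mathbb{R}_+)$ is a Banach algebra that embeds into $L^{\infty}$; the hypotheses (a1)-(a2) are precisely tailored so that the map $q\mapsto |q|^pq$ inherits enough smoothness from the profile $z\mapsto|z|^p z$ to admit the tame estimate
\[
\bigl\|\,|q|^p q\bigr\|_{H^s(\mathbb{R}_+)} \le C(\|q\|_{H^s})\,\|q\|_{H^s}^{p+1},\qquad \bigl\|\,|q|^p q-|\tilde q|^p \tilde q\bigr\|_{H^s(\mathbb{R}_+)} \le C(R)\,\|q-\tilde q\|_{H^s(\mathbb{R}_+)}
\]
for $\|q\|_{H^s},\|\tilde q\|_{H^s}\le R$. (For non-integer $s$ this is a standard Moser-type/fractional chain rule; for integer $s$ one differentiates and uses the regularity of $z\mapsto|z|^p z$ guaranteed by (a1).) Combining with the linear bounds yields, for $\Lambda(q):=W(q_0,g_0,g_1)-i\Phi(f(q))$,
\[
\|\Lambda(q)\|_{C([0,T_0];H^s)} \le C_0\bigl(\|q_0\|_{H^s}+\|g_0\|_{H^{\frac{2s+3}{8}}}+\|g_1\|_{H^{\frac{2s+1}{8}}}\bigr) + C_1\,T_0^{\alpha}\,R^{p+1},
\]
together with an analogous Lipschitz bound. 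Choosing $R$ roughly twice the size of the data term and $T_0$ so small that $C_1T_0^{\alpha}R^p<\tfrac12$, the map $\Lambda$ is a contraction on the closed ball of radius $R$ in $C([0,T_0];H^s(\mathbb{R}_+))$, producing the unique solution required for (i).

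Continuous dependence (ii) is then immediate: for data $(q_0,g_0,g_1)$ and $(\tilde q_0,\tilde g_0,\tilde g_1)$ in a bounded set $B$, the radius $R$ and time $T_0$ can be chosen uniformly on $B$, and subtracting the two fixed-point identities, together with the Lipschitz nonlinear bound and absorption of the $T_0^{\alpha}C_1R^p$ term, gives a Lipschitz estimate of the difference of the two solutions in $C([0,T_0];H^s)$ by the difference of the data in the product norm. The blow-up alternative (iii) follows by a standard iteration: if $T_{\max}<T$ and $\limsup_{t\uparrow T_{\max}}\|q(t)\|_{H^s(\mathbb{R}_+)}=:M<\infty$, then picking a time $t_\star$ sufficiently close to $T_{\max}$ one can restart the local existence argument from data $(q(t_\star),g_0|_{[t_\star,T]},g_1|_{[t_\star,T]})$ with an existence time $T_0(M,\|g_0\|,\|g_1\|)$ that is bounded below away from zero, extending the solution past $T_{\max}$ and contradicting maximality.

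The main technical obstacle is the Duhamel/linear step: unlike on the full line, the Duhamel operator $\Phi$ must produce an $H^s$-valued solution with matching zero Dirichlet and Neumann traces, so one cannot simply invoke a free propagator. This is where the hidden regularity of the Fokas representation formula and the sharp time-regularity indices $\tfrac{2s+3}{8},\tfrac{2s+1}{8}$ are essential; the small-$T_0$ factor needed to close the contraction must come from these linear bounds (e.g.\ by embedding $C([0,T_0];H^s)\hookrightarrow L^q_t H^s$ and paying a $T_0^{\alpha}$ in Hölder), and the restriction $s\neq\tfrac32$ is forced by the Dirichlet trace theory at this critical exponent.
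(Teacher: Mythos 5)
Your proposal follows essentially the same route as the paper: a contraction on a closed ball in $C([0,T_0];H^s(\mathbb{R}_+))$ built from the whole-line propagator applied to extensions of the datum and of the nonlinearity, corrected by the boundary-data-to-solution operator $S_b$ acting on the (extended) trace discrepancies, with the Moser-type estimates for $|q|^pq$ and the factor $T_0$ from the $L^1_tH^s$ Duhamel bound supplying the smallness; uniqueness via Gronwall, continuous dependence via the linear estimates on the difference equation, and the blow-up alternative via the standard restart argument all match the paper. One minor correction: the exclusion $s\neq\frac{3}{2}$ is forced by the Neumann data, since $\frac{2s+1}{8}=\frac{1}{2}$ exactly at $s=\frac{3}{2}$, rather than by the Dirichlet trace theory.
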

 \begin{rem}
The proof of the above theorem is based on the Fokas method  (\cite{fokas1}, \cite{fokasb}) combined with classical contraction arguments. The Fokas method is a unified approach for solving initial-(inhomogeneous) boundary value problems for a general class of linear evolution equations. It has significant advantages over traditional methods.  One of these advantages is that the solution formula obtained via the Fokas method is uniformly convergent at the boundary points. This is an important property for numerical studies.

Another remarkable feature of the Fokas method is that one can obtain the necessary space and time estimates for the corresponding linear evolution operator directly from the representation formula by using Fourier analysis.  This allows one to easily study the wellposedness of the initial-boundary value problem for corresponding nonlinear models.  The study of rigorous wellposedness analysis of nonlinear initial-boundary value problems using the Fokas method was initiated by \cite{fokasKdV}, \cite{fokas}.  In these studies, the authors determine the fractional Sobolev spaces for initial and boundary data for which the local Hadamard well-posedness holds true.  Similar results have been obtained for other models such as the \emph{good} Boussinesq equation \cite{himonas15}, and two dimensional nonlinear Schrödinger \cite{himarx1} and reaction diffusion equations \cite{himarx2}.
\end{rem}

Second, we prove the local existence and uniqueness at the low regularity setting $s<1/2$.
\begin{thm}[Low regularity]\label{thmlowreg} Let $T>0$, $s\in [0,\frac{1}{2})$, $p\in (0,\frac{8}{1-2s}]$, $q_0\in H^s(\mathbb{R_+})$, $\displaystyle g_0\in H^{\frac{2s+3}{8}}(0,T)$, $\displaystyle g_1\in H^{\frac{2s+1}{8}}(0,T)$, $\lambda=\frac{8(p+2)}{p(1-2s)}$, and $r=\frac{p+2}{1+sp}$. Then, \eqref{4th.1}-\eqref{4th.4} has a unique solution $q\in C([0,T_0];H^s(\mathbb{R_+}))\cap L^\lambda(0,T_0;W^{s,r}(\mathbb{R}_+))$ for some ${T_0\in (0,T]}$.
 \end{thm}

Finally, we prove the global wellposedness of weak (more precisely $H^2$) solutions for the defocusing problem.  We are able to prove the global wellposedness for $p\le 2$, as opposed to $p>0$, which is the case for the problem posed with homogeneous boundary conditions.
\begin{thm}[Global wellposedness] Let $\kappa\in \mathbb{R}_{-}$ (defocusing nonlinearity), $T>0$, $p\le 2$, $q_0\in H^2(\mathbb{R_+})$, $g_0\in H^1(0,T)$, $g_1\in H^1(0,T)$, $q_0(0)=g_0(0)$, $q_0'(0)=g_1(0)$. Then, the corresponding local solution $q\in C([0,T];H^2(\mathbb{R}_+))$ is global. Moreover, the global solution $q$ satisfies the hidden trace regularities given by $q_{xx}(0,\cdot), q_{xxx}(0,\cdot)\in L^2(0,T)$.
 \end{thm}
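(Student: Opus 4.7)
The plan is to combine the local wellposedness and blow-up alternative of Theorem~1.1 with a uniform a priori $H^2$ bound, which I will derive from two conservation-type identities supplemented by hidden boundary trace regularity. Multiplying \eqref{4th.1} by $\bar q$, taking imaginary parts and integrating by parts on $\mathbb{R}_+$ yields the $L^2$ identity
$$\tfrac{d}{dt}\|q\|_{L^2}^2 = 2\operatorname{Im}\bigl[g_1(t)\,\overline{q_{xx}(0,t)}\bigr] - 2\operatorname{Im}\bigl[g_0(t)\,\overline{q_{xxx}(0,t)}\bigr].$$
Differentiating the defocusing Hamiltonian
$$E(t) := \tfrac{1}{2}\|q_{xx}(t)\|_{L^2}^2 - \tfrac{\kappa}{p+2}\,\|q(t)\|_{L^{p+2}}^{p+2},$$
which is nonnegative since $\kappa<0$, integrating by parts twice in $x$, and using \eqref{4th.1} to convert the resulting $\int q_t\bar q_{xxxx}$ integral (the $L^{p+2}$ potential contributions cancel exactly), one arrives at
$$\tfrac{d}{dt}E(t) = \operatorname{Re}\bigl[q_{xxx}(0,t)\,\overline{g_0'(t)}\bigr] - \operatorname{Re}\bigl[q_{xx}(0,t)\,\overline{g_1'(t)}\bigr].$$
Since $E$ dominates $\tfrac{1}{2}\|q_{xx}\|_{L^2}^2$, controlling the right-hand side controls the $H^2$ norm.

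The crux is the hidden trace regularity $q_{xx}(0,\cdot),\,q_{xxx}(0,\cdot)\in L^2(0,T_0)$. I plan to obtain this by analyzing the Fokas representation formula established for the linear biharmonic problem: differentiating the explicit contour integrals twice (resp. three times) in $x$, evaluating at $x=0$, and applying Plancherel together with the same Fourier-side estimates that were used for the linear space and time bounds behind Theorem~1.1, one arrives at a linear-level inequality of the form
$$\|q_{xx}(0,\cdot)\|_{L^2(0,T_0)} + \|q_{xxx}(0,\cdot)\|_{L^2(0,T_0)} \le C\Bigl(\|q_0\|_{H^2} + \|g_0\|_{H^1} + \|g_1\|_{H^1} + \|f(q)\|_{L^1(0,T_0;L^2)}\Bigr).$$
By the Sobolev embedding $H^2(\mathbb{R}_+)\hookrightarrow L^\infty$ combined with the assumptions (a1)-(a2) on $p$, the nonlinear forcing satisfies the pointwise-in-time bound $\|f(q(t))\|_{L^2_x}\le C\,\|q(t)\|_{H^2}^{\,p}\,\|q(t)\|_{L^2}$, so the Duhamel contribution is controlled by $T_0\,\|q\|_{L^\infty_{T_0}H^2}^{\,p}\,\|q\|_{L^\infty_{T_0}L^2}$.

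Feeding these estimates into both identities and applying Cauchy-Schwarz with $\|g_j'\|_{L^2(0,T_0)}\le\|g_j\|_{H^1(0,T)}$ leads to a coupled inequality of schematic form
$$\|q\|_{L^\infty_{T_0}H^2}^{2} \le C_{\mathrm{data}} + C\,T_0\,(\|g_0\|_{H^1}+\|g_1\|_{H^1})\,\|q\|_{L^\infty_{T_0}H^2}^{\,p}\,\|q\|_{L^\infty_{T_0}L^2},$$
where $\|q\|_{L^\infty_{T_0}L^2}$ is in turn bounded via the $L^2$ identity by a similar expression. When $p<2$ the right-hand side is strictly subquadratic in $\|q\|_{L^\infty_{T_0}H^2}$, so Young's inequality closes the bootstrap on the entire interval $[0,T]$ and rules out blow-up; the borderline case $p=2$ is handled identically provided the prefactor is less than $1$, which is exactly the small-data hypothesis in the statement. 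The announced hidden trace regularities $q_{xx}(0,\cdot),\,q_{xxx}(0,\cdot)\in L^2(0,T)$ then come for free as a by-product of the closed estimate. The principal obstacle is the linear hidden trace estimate above: whereas $q_{xx}(0,\cdot)$ can also be produced as a perfect-square bulk term from the spatial multiplier $\bar q_x$, the trace $q_{xxx}(0,\cdot)$ does not emerge naturally from any spatial multiplier and must be extracted via a careful Fourier-analytic study of the oscillatory contour integrals supplied by the Fokas method.
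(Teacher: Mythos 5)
Your two energy identities (the $L^2$ balance and $\tfrac{d}{dt}E=\operatorname{Re}[q_{xxx}(0,t)\bar g_0']-\operatorname{Re}[q_{xx}(0,t)\bar g_1']$) agree with the paper's \eqref{iden00} and \eqref{iden00h2}, and the overall strategy (a priori $H^2$ bound plus the blow-up alternative) is the right one. The gap is in how you obtain and then use the hidden trace regularity. First, your assertion that $q_{xxx}(0,\cdot)$ "does not emerge naturally from any spatial multiplier" is false: multiplying the equation by $\bar q_{xxx}$ and taking imaginary parts, the dispersive term gives $\operatorname{Re}\int_0^\infty(\partial_x^4q)\bar q_{xxx}\,dx=\tfrac12\int_0^\infty\partial_x|q_{xxx}|^2\,dx=-\tfrac12|q_{xxx}(0,t)|^2$, a perfect square at the boundary. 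This is exactly what the paper does (multipliers $\bar q_x$ and $\bar q_{xxx}$ yield \eqref{combinediden01} and \eqref{combinediden02}), and it makes the Fourier-side linear trace estimate you invoke --- which you do not prove, and which would require extending the paper's time-trace lemma by two and three $x$-derivatives --- unnecessary.

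Second, and more seriously, even granting your linear hidden-trace inequality, the closure fails for $1\le p<2$. Your route bounds the traces \emph{linearly} by $\|f(q)\|_{L^1(0,T_0;L^2)}\lesssim T_0\|q\|_{L^\infty_{T_0}H^2}^{p}\|q\|_{L^\infty_{T_0}L^2}$, so the energy inequality reads $E\lesssim C+C\,T_0\|q\|_{H^2}^{p}\|q\|_{L^2}$. The factor $\|q\|_{L^2}$ is \emph{not} a priori bounded here (the $L^2$ identity itself involves the unknown traces), and feeding the $L^2$ identity back in gives at best $\|q\|_{L^2}\lesssim C+T_0\|q\|_{H^2}^p$, hence $\|q\|_{H^2}^2\lesssim C+T_0\|q\|_{H^2}^{p}+T_0^2\|q\|_{H^2}^{2p}$, which is subquadratic only for $p<1$. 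The paper's multiplier identities avoid this because they bound the \emph{squares} $A^2(t)=\int_0^t|q_{xxx}(0,\tau)|^2d\tau$ and $B^2(t)=\int_0^t|q_{xx}(0,\tau)|^2d\tau$ by $c+cE(t)+cE(t)^{(p+2)/2}$ (the worst nonlinear contribution being $\operatorname{Re}\int|q|^pq_x\bar q_{xx}\,dx\lesssim\|q\|_{H^2}^{p+2}$), so the traces themselves enter $E\le E(0)+A\|g_0\|_{H^1}+B\|g_1\|_{H^1}$ only as $E^{(p+2)/4}$, which is sublinear in $E$ precisely when $p<2$; the case $p=2$ then requires exactly the smallness of $\|(g_0,g_1)\|_{H^1\times H^1}$. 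You would need to replace your linear-estimate step by these quadratic trace identities (or otherwise gain a square on the traces) to reach the claimed range $p\le 2$.
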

\begin{rem}
  The global wellposedness problem for the nonlinear biharmonic Schrödinger equation is a nontrivial problem in the presence of inhomogeneous boundary conditions as opposed to the case of homogeneous boundary conditions.  The main difficulty lies in the fact that one looses all energy conservation and control properties once $g_0$ and $g_1$ are non-zero.  For instance, the most basic energy identiy, namely the $L^2$-energy, satisfies an equality given by  $$\frac{1}{2}\frac{d}{dt}\int_0^\infty|q|^2dx=\text{Im}\left[q_{xxx}(0,t)\bar{g}_0(t)\right]-\text{Im}\left[q_{xx}(0,t)\bar{g}_1(t)\right].$$  This identity involves the unknown traces $q_{xxx}(0,t)$ and $q_{xx}(0,t)$. Higher order energy estimates are even more complicated.

  The extra regularity result  $q_{xx}(0,\cdot), q_{xxx}(0,\cdot)\in L^2(0,T)$ proved in the above theorem is called a hidden trace regularity since in general for an arbitrary $H^2$ function, these traces do not need to be well-defined in the sense of Sobolev trace theory. Hence, this shows that the biharmonic Schrödinger operator has a hidden regularizing property in the sense of traces.
\end{rem}
\subsection{Orientation}
We prove the main results in several steps:
\subsubsection*{Step 1 - Representation formula.} We use the Fokas method (also known as the \emph{unified transform method}) to obtain a representation formula for the solution of the linear biharmonic Schrödinger equation with interior force and inhomogenenous boundary inputs. The derivation of the representation formula is more complicated than the classical Schrödinger equation due to the higher order nature of the evolution operator.
\subsubsection*{Step 2 - Cauchy problem.} Secondly, we study the Cauchy problem on the spatial domain $\mathbb{R}$. We obtain the necessary space and times estimates on the solutions of the Cauchy problem.  These estimates are later used to study the half-line problem with zero boundary inputs by extending the given initial datum from half-line to the whole line.
\subsubsection*{Step 3 - Half line problem.} We use the representation formula obtained in Step 1 to study the half-line problem with zero initial data and inhomogeneous Dirichlet-Neumann boundary inputs. We obtain space and time estimates on the solutions of the half-line problem.  The analysis poses more challenges than the Schrödinger equation with only Dirichlet input, because the inhomogeneous Neumann input here requires us to deal with integrals that involve singular integrands.  Singularities are treated with cut-off functions.
\subsubsection*{Step 4 - Operator theoretic formula.} In this step, we express the representation formula of the linear problem with internal force in operator theoretic form.  Then, we replace the internal source with the given nonlinearity and applying the contraction argument to this form of the representation formula to obtain the local wellposedness.
\subsubsection*{Step 5 - Strichartz estimates.} We treat the low regularity case $s<\frac{1}{2}$ for the nonlinear model by proving Strichartz estimates by using the oscillatory integral theory.
\subsubsection*{Step 6 - Global wellposedness.} Finally we use the multiplier method to obtain useful energy estimates that involve information on the unknown boundary traces.  The classical multipliers for the biharmonic Schrödinger equations are not sufficient.  Therefore, we use also the control theoretic multipliers to prove hidden trace regularities for the second and third order boundary traces of the solution.  We combine these with $H^2$ energy identities to deduce the global wellposedness of solutions up to cubic powers.  The results obtained here are quite interesting compared to solutions of the homogeneous boundary value problem, whose solutions can be shown to be global for all powers in the defocusing case.

\section{Linear model}
In this section, we consider the linear biharmonic Schrödinger equation on the right half-line with inhomogeneous Dirichlet-Neumann data in $L^2-$based fractional Sobolev spaces:
 \begin{align}
  &iq_{t}+\partial_x^4q=f, \quad (x,t)\in\mathbb{R}_+\times (0,T),\label{1.1}\\
  &q(x,0)=q_{0}(x), \quad x\in\mathbb{R}_+,\label{1.2}\\
  &q(0,t)=g_{0}(t), \quad  t\in(0,T),\label{1.3} \\
  &q_{x}(0,t)=g_{1}(t), \quad  t\in(0,T).\label{1.4}
  \end{align}

\subsection{Representation formula} We will obtain a representation formula for the solution of \eqref{1.1}-\eqref{1.4}.  In order to do this, we will use the Fokas (unified transform) method.  To this end, let $\hat{q}(k,t)$ denote the spatial Fourier transform of the solution of \eqref{1.1}-\eqref{1.4} on the right half-line:
\begin{equation}\label{qhatkt}
  \hat{q}(k,t) \equiv \int_0^\infty e^{-ikx}q(x,t)dx, \quad \text{Im}\,k\le 0.
\end{equation}  Note that the condition $\text{Im}\,k\le 0$ is enforced so that the right hand side of the formula \eqref{qhatkt} will in general converge.  Applying this transform to the problem \eqref{1.1}-\eqref{1.4}, after some computations we get

\begin{multline}\label{qthatkt}
  i\hat{q}_t(k,t)+k^4\hat{q}(k,t)-q_{xxx}(0,t)-ikq_{xx}(0,t)+k^2q_x(0,t)+ik^3q(0,t)=\hat{f}(k,t), \quad \text{Im}\,k\le 0.
\end{multline}

Integrating the above equation in the temporal variable, we obtain

\begin{multline}\label{Intqthatkt}
  e^{-ik^4t}\hat{q}(k,t)=\hat{q}_0(k)\\
  -\int_0^te^{-ik^4s}\left[iq_{xxx}(0,s)-kq_{xx}(0,s)-ik^2q_x(0,s)+k^3q(0,s)+i\hat{f}(k,s)\right]ds, \quad \text{Im}\,k\le 0.
\end{multline} Introducing the formula \begin{equation}\label{qtildej}
                                          \tilde{g_{j}}(k,t)=\int_{0}^{t}e^{ks}\partial_x^jq(0,s)ds,\quad k\in\mathbb{C}, \,j= \overline{0,3},
                                        \end{equation} we can rewrite \eqref{Intqthatkt} as
\begin{multline}\label{IntqthatktRe}
e^{-ik^4t}\hat{q}(k,t)=\hat{q}_0(k)-i\tilde{g}_3(-ik^4,t)+k\tilde{g}_2(-ik^4,t)+ik^2\tilde{g}_1(-ik^4,t)-k^3\tilde{g}_0(-ik^4,t)\\
-i\int_0^te^{-ik^4s}\hat{f}(k,s)ds, \quad \text{Im}\,k\le 0.
\end{multline}

Multiplying both sides of \eqref{IntqthatktRe} by $e^{ik^4t}$ and then taking the inverse Fourier transform, we get
\begin{multline}\label{qxt}
{q}(x,t)=\frac{1}{2\pi}\int_{-\infty}^\infty e^{ikx+ik^4t}\hat{q}_0(k)dk\\
-\frac{1}{2\pi}\int_{-\infty}^\infty e^{ikx+ik^4t}\left[i\tilde{g}_3(-ik^4,t)-k\tilde{g}_2(-ik^4,t)-ik^2\tilde{g}_1(-ik^4,t)+k^3\tilde{g}_0(-ik^4,t)\right]dk\\
-\frac{i}{2\pi}\int_{-\infty}^\infty e^{ikx+ik^4t}\left[\int_0^te^{-ik^4s}\hat{f}(k,s)ds\right]dk, 0<x<\infty, t>0.
\end{multline}

The above formula consists of two unknown boundary traces, namely $q_{xxx}(0,t)$ and $q_{xx}(0,t)$.  The idea of the uniform transform method is based on eliminating these unknown quantities from the solution formula.  This is achieved in two steps:

\begin{enumerate}
  \item[(1)] We deform the contour of integration involving unknown quantities from $(-\infty,\infty)$ to another appropriate contour.
  \item[(2)] We take advantage of the invariant properties of equation \eqref{IntqthatktRe} satisfied by $\hat{q}(k,t)$.
\end{enumerate}

To this end, we first consider the region $D$ described by
\begin{equation} D\equiv \{k\in\mathbb{C}\,|\,\text{Re}\,(-ik^4)<0\}.
\end{equation}  It is easy to show that the above region can also be written by

\begin{equation} D\equiv \left\{k\in\mathbb{C}\,|\,\text{Arg}\,k\in \bigcup_{m=0}^3\left(\frac{(2m+1)\pi}{4},\frac{(m+1)\pi}{2}\right)\right\},
\end{equation} where the principle argument of a complex number is assumed to be defined in the interval $[0,2\pi).$

Now, we split $D$ in two disjoint parts depending on whether $k\in D$ is in the upper or lower half-plane: $D^+\equiv D\cap \mathbb{C}^+$ and $D^-\equiv D\cap \mathbb{C}^-$.  See Figure \ref{D+D-} below.

\begin{figure}[h]
  \centering
   \includegraphics[scale=.75]{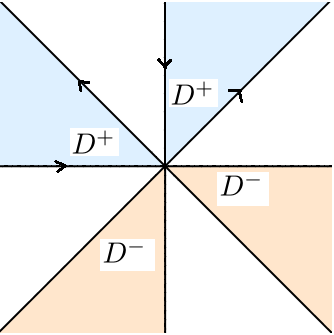}
  \caption{The region $D=D^+\cup D^-$}\label{D+D-}
\end{figure}

The following lemma follows from the the unified theory given in \cite[Proposition 1.1, Chapter 1]{fokasb}:

\begin{lem}[Deformation]
  Let $q$ be a solution of \eqref{1.1} on $\Omega\equiv \mathbb{R}_+\times (0,T)$ such that $q$ is sufficiently smooth up to the boundary of $\Omega$ and decays sufficiently fast as $x\rightarrow \infty$, uniformly in $[0,T]$.  Then, $q(x,t)$ can be represented by
\begin{multline}\label{qDeformed}
{q}(x,t)=\frac{1}{2\pi}\int_{-\infty}^\infty e^{ikx+ik^4t}\hat{q}_0(k)dk
-\frac{i}{2\pi}\int_{-\infty}^\infty e^{ikx+ik^4t}\left[\int_0^te^{-ik^4s}\hat{f}(k,s)ds\right]dk\\
-\frac{1}{2\pi}\int_{\partial D^+} e^{ikx+ik^4t}\tilde{g}(k)dk,
\end{multline} where $\tilde{g}(k)=i\tilde{g}_3(-ik^4,T)-k\tilde{g}_2(-ik^4,T)-ik^2\tilde{g}_1(-ik^4,T)+k^3\tilde{g}_0(-ik^4,T)$ and $\partial D^+$ is oriented in such a way that $D^+$ is to the left of $\partial D^+$.
\end{lem}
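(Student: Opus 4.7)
My plan is to reduce the claim to two successive complex-analytic manipulations of the boundary-data piece of \eqref{qxt}, keeping the initial-data and forcing terms untouched. Writing
\[
\tilde g^{(\tau)}(k) := i\tilde g_3(-ik^4,\tau)-k\tilde g_2(-ik^4,\tau)-ik^2\tilde g_1(-ik^4,\tau)+k^3\tilde g_0(-ik^4,\tau),
\]
what must be shown is $\int_{-\infty}^\infty e^{ikx+ik^4 t}\tilde g^{(t)}(k)\,dk = \int_{\partial D^+} e^{ikx+ik^4 t}\tilde g^{(T)}(k)\,dk$. I will obtain this by first deforming the real line onto $\partial D^+$ at fixed truncation time $t$, then showing that the value is unchanged when $t$ is promoted to $T$. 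Throughout, the integrand is entire in $k$ since each map $k\mapsto\tilde g_j(-ik^4,\tau) = \int_0^\tau e^{-ik^4 s}\partial_x^j q(0,s)\,ds$ is entire for fixed $\tau$.

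For the first step, the pertinent region is $\mathbb{C}^+\setminus\overline{D^+}$, where $\text{Re}(-ik^4)\ge 0$. Since $s\in[0,t]$ forces $t-s\ge 0$, one has $|e^{ik^4(t-s)}|\le 1$ throughout this region, so $e^{ik^4 t}\tilde g_j(-ik^4,t) = \int_0^t e^{ik^4(t-s)}\partial_x^j q(0,s)\,ds$ is uniformly bounded there. To overcome the polynomial prefactors $k^n$ ($0\le n\le 3$), I will integrate by parts once in $s$; the smoothness of $q$ up to $\partial\Omega$ then promotes the bound to $|\tilde g_j(-ik^4,t)|\le C|k|^{-4}$ on large-radius arcs in $\mathbb{C}^+\setminus\overline{D^+}$. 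Combined with $|e^{ikx}|=e^{-|k|x\sin\theta}$ for $x>0$ and $\theta\in(0,\pi)$, Jordan's lemma eliminates these arcs, and Cauchy's theorem on the enclosed analytic region yields $\int_{-\infty}^\infty e^{ikx+ik^4 t}\tilde g^{(t)}(k)\,dk = \int_{\partial D^+} e^{ikx+ik^4 t}\tilde g^{(t)}(k)\,dk$, with $\partial D^+$ carrying the orientation that places $D^+$ on its left.

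For the second step, the identity
\[
e^{ik^4 t}\bigl[\tilde g_j(-ik^4,t)-\tilde g_j(-ik^4,T)\bigr] = -\int_t^T e^{ik^4(t-s)}\partial_x^j q(0,s)\,ds
\]
now features $s\ge t$, so in $\overline{D^+}$ (where $\text{Re}(-ik^4)\le 0$) one again has $|e^{ik^4(t-s)}|\le 1$, strictly decaying in the open interior. The same integration by parts yields $O(|k|^{-4})$ decay, and $|e^{ikx}|$ decays in every open subsector of $D^+$. Closing $\partial D^+$ by large circular arcs inside each of the two sectors comprising $D^+$, Jordan's lemma erases the arcs and Cauchy's theorem forces $\int_{\partial D^+} e^{ikx+ik^4 t}[\tilde g^{(t)}(k)-\tilde g^{(T)}(k)]\,dk = 0$. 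Combining this with Step 1 and substituting into \eqref{qxt} produces \eqref{qDeformed}.

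I expect the main obstacle to be securing the $O(|k|^{-4})$ decay in Step 1: without the integration by parts, the cubic prefactor would overpower Jordan's lemma on the arcs in $\mathbb{C}^+\setminus\overline{D^+}$. This is exactly why the lemma assumes $q$ to be sufficiently smooth up to $\partial\Omega$ and decaying as $x\to\infty$. The second step is comparatively routine once one notices the favorable sign of $(s-t)\text{Re}(-ik^4)$ when $s\ge t$ and $k\in\overline{D^+}$.
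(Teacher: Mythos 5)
Your argument is correct and is essentially the canonical proof of this lemma: the paper itself offers no proof, deferring entirely to \cite[Proposition 1.1, Chapter 1]{fokasb}, and your two steps --- deforming $\mathbb{R}$ to $\partial D^+$ through $\mathbb{C}^+\setminus\overline{D^+}$ where $\text{Re}(-ik^4)\ge 0$ controls $e^{ik^4(t-s)}$ for $s\le t$, then promoting $t$ to $T$ using $\text{Re}(-ik^4)\le 0$ on $\overline{D^+}$ for $s\ge t$, with one integration by parts in $s$ supplying the $O(|k|^{-4})$ decay needed against the cubic prefactor and Jordan's lemma (for $x>0$) removing the arcs --- reproduce exactly the mechanism used there. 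The only cosmetic slip is writing $|\tilde g_j(-ik^4,t)|\le C|k|^{-4}$ where you mean $|e^{ik^4t}\tilde g_j(-ik^4,t)|\le C|k|^{-4}$, which your own displayed identity makes clear.
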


Now, we will use the invariant properties of the equation \eqref{IntqthatktRe}.  Replacing $k$ by $-k$ in this equation keeps $\tilde{g}_j(-ik^4,T)$ invariant for $j = \overline{0,3}$, and one obtains
\begin{multline}\label{Intqthatkt-b}
  e^{-ik^4T}\hat{q}(-k,T)=\hat{q}_0(-k)-i\tilde{g}_3(-ik^4,T)-k\tilde{g}_2(-ik^4,T)+ik^2\tilde{g}_1(-ik^4,T)+k^3\tilde{g}_0(-ik^4,T)\\
  -i\int_0^Te^{-ik^4s}\hat{f}(-k,s)ds, \quad \text{Im}\,k\ge 0.
\end{multline} Similarly, replacing $k$ by $ik$ and $-ik$, one can keep $\tilde{g}_j(-ik^4,T)$ invariant for $j = \overline{0,3}$.  Moreover, we have the identities
\begin{multline}\label{Intqthatkt-c}
  e^{-ik^4T}\hat{q}(ik,T)=\hat{q}_0(ik)-i\tilde{g}_3(-ik^4,T)+ik\tilde{g}_2(-ik^4,T)-ik^2\tilde{g}_1(-ik^4,T)+ik^3\tilde{g}_0(-ik^4,T)\\
  -i\int_0^Te^{-ik^4s}\hat{f}(ik,s)ds, \quad \text{Re}\,k\le 0,
\end{multline} and
\begin{multline}\label{Intqthatkt-d}
  e^{-ik^4T}\hat{q}(-ik,T)=\hat{q}_0(-ik)-i\tilde{g}_3(-ik^4,T)-ik\tilde{g}_2(-ik^4,T)-ik^2\tilde{g}_1(-ik^4,T)-ik^3\tilde{g}_0(-ik^4,T)\\
  -i\int_0^Te^{-ik^4s}\hat{f}(-ik,s)ds, \quad \text{Re}\,k\ge 0,
\end{multline}respectively. Let us define two subregions of $D^+$ (See Figure \ref{D+D-}) by $$D_1^+:=D^+\cap \{\text{Re}\,k\ge 0\}\text{ and }D_2^+:=D^+\cap \{\text{Re}\,k\le 0\}$$ and let us also set the following transformation for the given boundary data: \begin{equation}\label{Gj}G_j(k,t):=\int_0^te^{ks}g_j(s)ds,\,k\in\mathbb{C},j=0,1.\end{equation}

\begin{figure}[h]
  \centering
   \includegraphics[scale=.75]{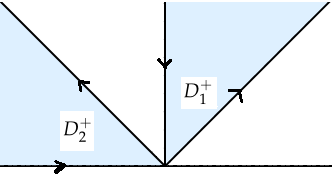}
  \caption{The region $D_1^+$ and $D_2^+$}\label{D1+D2+}
\end{figure}

The identities \eqref{Intqthatkt-b} and \eqref{Intqthatkt-d} are valid in $D_1^+$. Solving these identities for $\tilde{g}_2$ and $\tilde{g}_3$ and using the fact that $G_j=\tilde{g}_j$ for $j=0,1$, we obtain
\begin{multline}\label{g2tilded1}\tilde{g}_2(-ik^4,T)=\frac{e^{-ik^4T}}{k(1-i)}\left(\hat{q}(-ik,T)-\hat{q}(-k,T)\right)+\frac{\hat{q}_{0}(-k)-\hat{q}_{0}(-ik)}{k(1-i)}\\
+\frac{k^{2}(1+i)}{1-i}G_0(-ik^{4},T)+\frac{2ik}{1-i}G_1(-ik^{4},T)+\frac{i-1}{2k}\int_0^Te^{-ik^4s}\left[\hat{f}(-ik,s)-\hat{f}(-k,s)\right]ds\end{multline} and
\begin{multline}\label{g3tilded1}\tilde{g}_3(-ik^4,T)=-\frac{e^{-ik^4T}}{1+i}\left(\hat{q}(-ik,T)-i\hat{q}(-k,T)\right)+\frac{-i\hat{q}_{0}(-k)+\hat{q}_{0}(-ik)}{1+i}\\
-\frac{2ik^{3}}{1+i}G_0(-ik^{4},T)+\frac{k^2(1-i)}{1+i}G_1(-ik^{4},T)+\frac{i-1}{2}\int_0^Te^{-ik^4s}\left[i\hat{f}(-ik,s)+\hat{f}(-k,s)\right]ds\end{multline} for $k\in D_1^+$. Similarly, by using the identities \eqref{Intqthatkt-b} and \eqref{Intqthatkt-c}, which are valid on $D_2^+$, we have
\begin{multline}\label{g2tilded2}\tilde{g}_2(-ik^4,T)=\frac{e^{-ik^4T}}{k(1+i)}\left(\hat{q}(ik,T)-\hat{q}(-k,T)\right)+\frac{\hat{q}_{0}(-k)-\hat{q}_{0}(ik)}{k(1+i)}\\
+\frac{k^{2}(1-i)}{1+i}G_0(-ik^{4},T)+\frac{2ik}{1+i}G_1(-ik^{4},T)+\frac{1+i}{2k}\int_0^Te^{-ik^4s}\left[\hat{f}(ik,s)-\hat{f}(-k,s)\right]ds\end{multline} and
\begin{multline}\label{g3tilded2}\tilde{g}_3(-ik^4,T)=\frac{e^{-ik^4T}}{1-i}\left(\hat{q}(ik,T)+i\hat{q}(-k,T)\right)-\frac{i\hat{q}_{0}(-k)+\hat{q}_{0}(ik)}{1-i}\\
-\frac{2ik^{3}}{1-i}G_0(-ik^{4},T)+\frac{k^2(1+i)}{1-i}G_1(-ik^{4},T)+\frac{1+i}{2}\int_0^Te^{-ik^4s}\left[i\hat{f}(ik,s)-\hat{f}(-k,s)\right]ds\end{multline} for $k\in D_2^+$. Using \eqref{g2tilded1}-\eqref{g3tilded2} in \eqref{qDeformed} together with the fact that  $G_j=\tilde{g}_j$ for $j=0,1$, we deduce the following identity:
\begin{multline}\label{qDeformednew}
{q}(x,t)=\frac{1}{2\pi}\int_{-\infty}^\infty e^{ikx+ik^4t}\hat{q}_0(k)dk-\frac{1}{2\pi}\int_{\partial D_1^+} e^{ikx+ik^4(t-T)}\left[-(1+i)\hat{q}(-ik,T)+i\hat{q}(-k,T)\right]dk\\
-\frac{i}{2\pi}\int_{-\infty}^\infty e^{ikx+ik^4t}\left[\int_0^te^{-ik^4s}\hat{f}(k,s)ds\right]dk-\frac{1}{2\pi}\int_{\partial D_1^+} e^{ikx+ik^4t}\left[(1+i)\hat{q}_0(-ik)-i\hat{q}_0(-k)\right]dk\\
-\frac{1}{2\pi}\int_{\partial D_1^+} e^{ikx+ik^4t}\left[2k^3(1-i)G_0(-ik^4,T)+2k^2\left(1-i\right)G_1(-ik^4,T)\right]dk\\
-\frac{1}{2\pi}\int_{\partial D_1^+}e^{ikx+ik^4t}\left[\int_0^Te^{-ik^4s}\left((1-i)\hat{f}(-ik,s)-\hat{f}(-k,s)\right)ds\right]dk\\
-\frac{1}{2\pi}\int_{\partial D_2^+} e^{ikx+ik^4(t-T)}\left[(i-1)\hat{q}(ik,T)-i\hat{q}(-k,T)\right]dk\\
-\frac{1}{2\pi}\int_{\partial D_2^+} e^{ikx+ik^4t}\left[(1-i)\hat{q}_0(ik)+i\hat{q}_0(-k)\right]dk\\
-\frac{1}{2\pi}\int_{\partial D_2^+} e^{ikx+ik^4t}\left[2k^3(1+i)G_0(-ik^4,T)-2k^2(1+i)G_1(-ik^4,T)\right]dk\\
-\frac{1}{2\pi}\int_{\partial D_2^+}e^{ikx+ik^4t}\left[\int_0^Te^{-ik^4s}\left(\hat{f}(-k,s)-(1+i)\hat{f}(ik,s)\right)ds\right]dk.
\end{multline}
The second and the seventh integrals at the right hand side of \eqref{qDeformednew} vanish by Cauchy's theorem and one obtains the following representation formula where the right hand side includes information coming only from the prescribed  initial-boundary-interior data:
\begin{multline}\label{qDeformednew2}
{q}(x,t)=\frac{1}{2\pi}\int_{-\infty}^\infty e^{ikx+ik^4t}\hat{q}_0(k)dk-\frac{i}{2\pi}\int_{-\infty}^\infty e^{ikx+ik^4t}\left[\int_0^te^{-ik^4s}\hat{f}(k,s)ds\right]dk\\-\frac{1}{2\pi}\int_{\partial D_1^+} e^{ikx+ik^4t}\left[(1+i)\hat{q}_0(-ik)-i\hat{q}_0(-k)\right]dk\\
-\frac{1}{\pi}\int_{\partial D_1^+} e^{ikx+ik^4t}\left[k^3(1-i)G_0(-ik^4,T)+k^2\left(1-i\right)G_1(-ik^4,T)\right]dk\\
-\frac{1}{2\pi}\int_{\partial D_1^+}e^{ikx+ik^4t}\left[\int_0^Te^{-ik^4s}\left((1-i)\hat{f}(-ik,s)-\hat{f}(-k,s)\right)ds\right]dk\\
-\frac{1}{2\pi}\int_{\partial D_2^+} e^{ikx+ik^4t}\left[(1-i)\hat{q}_0(ik)+i\hat{q}_0(-k)\right]dk\\
-\frac{1}{\pi}\int_{\partial D_2^+} e^{ikx+ik^4t}\left[k^3(1+i)G_0(-ik^4,T)-k^2(1+i)G_1(-ik^4,T)\right]dk\\
-\frac{1}{2\pi}\int_{\partial D_2^+}e^{ikx+ik^4t}\left[\int_0^Te^{-ik^4s}\left(\hat{f}(-k,s)-(1+i)\hat{f}(ik,s)\right)ds\right]dk.
\end{multline}

\subsection{Cauchy problem} In this section, we consider the biharmonic linear Schrödinger equation on the whole real line $\mathbb{R}$:
 \begin{align}
  &iy_{t}+\partial_x^4y=0, \quad (x,t)\in\mathbb{R}\times (0,T),\label{w1.1}\\
  &y(x,0)=y_{0}(x), \quad x\in\mathbb{R}.\label{w1.2}
  \end{align}
One has the following regularity properties regarding the Cauchy problem \eqref{w1.1}-\eqref{w1.2}.
\begin{lem}\label{Wrty0}
  Let $s\in \mathbb{R}$ and $y_0\in H^s(\mathbb{R})$.  Then the solution of \eqref{w1.1}-\eqref{w1.2}, denoted $y(t)=S_{\mathbb{R}}(t)y_0$, satisfies $y\in C([0,T];H^s(\mathbb{R}))$ with the conservation law \begin{equation}\label{conservationlaw}
                              \|y(\cdot,t)\|_{H^s(\mathbb{R})}=\|y_0\|_{H^s(\mathbb{R})},\,\,\,t\in [0,T].
                            \end{equation}
If $s\ge -\frac{3}{2}$, then $\displaystyle y\in C(\mathbb{R};H^{\frac{2s+3}{8}}(0,T))$ and there exists a constant $c_s\ge 0$ such that  \begin{equation}\label{timeest001}\sup_{x\in\mathbb{R}}\|y(x,\cdot)\|_{H^{\frac{2s+3}{8}}(0,T)}\le c_s(1+T^{\frac{1}{2}})\|y_{0}\|_{H^{s}(\mathbb{R})}.\end{equation} Moreover, if $s\ge -\frac{1}{2}$, then $y$ has the additional regularity $y_x\in C(\mathbb{R};H^{\frac{2s+1}{8}}(0,T))$ with the estimate
  \begin{equation}\label{timeest002}\sup_{x\in\mathbb{R}}\|\partial_xy(x,\cdot)\|_{H^{\frac{2s+1}{8}}(0,T)}\le c_s(1+T^{\frac{1}{2}})\|y_{0}\|_{H^{s}(\mathbb{R})}\end{equation} for some constant $c_s\ge 0$.
\end{lem}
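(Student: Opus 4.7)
The starting point is the Fourier representation
\[ y(x,t) = S_\mathbb{R}(t)y_0(x) = \frac{1}{2\pi}\int_\mathbb{R} e^{ikx + ik^4 t}\hat{y}_0(k)\,dk. \]
Because $|e^{ik^4 t}| = 1$, Plancherel in $x$ immediately yields the conservation law \eqref{conservationlaw}, and strong continuity $y \in C([0,T];H^s(\mathbb{R}))$ follows by dominated convergence applied to $\int_\mathbb{R}(1+k^2)^s|\hat{y}_0(k)|^2|e^{ik^4 t}-e^{ik^4 t_0}|^2\,dk$ as $t \to t_0$.

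For the time estimate \eqref{timeest001}, fix $x \in \mathbb{R}$ and set $r = (2s+3)/8 \ge 0$ (using $s \ge -3/2$). The idea is to bound the $H^r(0,T)$ norm by an $H^r(\mathbb{R})$ norm of an extension. I would choose a cutoff $\eta \in C_c^\infty(\mathbb{R})$ equal to $1$ on $[0,T]$ and supported in $[-1,T+1]$, and estimate $\|y(x,\cdot)\|_{H^r(0,T)} \le \|\eta\cdot y(x,\cdot)\|_{H^r(\mathbb{R})}$. Split the $k$-integral into $|k| \le 1$ and $|k| \ge 1$ to write $y = y_\text{lo} + y_\text{hi}$. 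The low-frequency part $y_\text{lo}(x,\cdot)$ is $C^\infty$ with all time derivatives bounded uniformly in $x$ by $c\|y_0\|_{H^s(\mathbb{R})}$ via Cauchy--Schwarz; multiplying by $\eta$ then produces the $(1+T^{1/2})$ factor in the $H^r(\mathbb{R})$ norm.

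The high-frequency part is handled by the change of variable $\tau = k^4$. For $k > 1$, with $k = \tau^{1/4}$ and $dk = \tfrac{1}{4}\tau^{-3/4}d\tau$,
\[ y_\text{hi}|_{k>1}(x,t) = \frac{1}{8\pi}\int_1^\infty e^{i\tau t}\Bigl[e^{i\tau^{1/4}x}\hat{y}_0(\tau^{1/4})\tau^{-3/4}\Bigr]d\tau, \]
which is (up to a constant) the inverse temporal Fourier transform of the bracketed $\tau$-function, extended by zero for $\tau \le 1$. Plancherel in $t$ yields
\[ \|y_\text{hi}|_{k>1}(x,\cdot)\|_{H^r(\mathbb{R})}^2 \lesssim \int_1^\infty (1+\tau^2)^r|\hat{y}_0(\tau^{1/4})|^2\tau^{-3/2}\,d\tau, \]
and reverting to $k$ via $\tau^{2r}\sim k^{2s+3}$, $\tau^{-3/2}=k^{-6}$, $d\tau = 4k^3\,dk$ gives $\lesssim\int_1^\infty k^{2s}|\hat{y}_0(k)|^2\,dk \le \|y_0\|_{H^s(\mathbb{R})}^2$; the $k < -1$ contribution is symmetric. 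This high-frequency bound is $T$-independent, so \eqref{timeest001} follows by combining both pieces.

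Estimate \eqref{timeest002} comes from the same recipe applied to $y_x$: the extra factor $ik$ in the integrand becomes $\tau^{1/4}$ after the substitution, producing $\tau^{-1/2}$ in place of $\tau^{-3/4}$, and with $r = (2s+1)/8$ the weight $\tau^{2r}\cdot\tau^{-1}\cdot d\tau$ again reverts to $4k^{2s}\,dk$; continuity of $x \mapsto y(x,\cdot) \in H^r(0,T)$ is a final dominated-convergence step in the frequency representation. The main obstacle will be the careful bookkeeping near the frequency threshold $|k|=1$: the Jacobian $\tau^{-3/4}$ of $\tau = k^4$ is non-integrable near $\tau=0$, so the low/high split must be engineered precisely to confine the Jacobian singularity to the low-frequency (polynomially-in-$T$-growing) piece, while preserving the sharp Sobolev scaling in the high-frequency (time-uniform) piece.
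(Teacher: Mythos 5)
Your proposal is correct and follows essentially the same route as the paper: Plancherel plus dominated convergence for the conservation law and continuity, a low/high frequency splitting with a cutoff at $|k|\sim 1$, Cauchy--Schwarz on the (bounded-frequency) piece to produce the $T^{1/2}$ factor, and the substitution $\tau=k^4$ turning the high-frequency piece into a temporal Fourier transform whose $H^{(2s+3)/8}(\mathbb{R})$ norm scales back to $\|y_0\|_{H^s(\mathbb{R})}$. The only cosmetic differences are that the paper obtains the low-frequency bound by estimating $H^m(0,T)$ norms directly and interpolating (rather than via a temporal cutoff $\eta$), and handles $y_x$ by differentiating the equation and reapplying the result with data $y_0'\in H^{s-1}(\mathbb{R})$ rather than carrying the extra factor $ik$ through the substitution; both variants are equivalent, and your closing observation about confining the Jacobian singularity $\tau^{-3/4}$ to the low-frequency piece is exactly the point of the decomposition.
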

%\begin{rem}
%  The estimates \eqref{conservationlaw}-\eqref{timeest001} in the above lemma in particular imply that the initial datum-to-solution operator $H^s(\mathbb{R})\ni y_0\mapsto y\in C([0,T];H^s(\mathbb{R}))\cap C(\mathbb{R};H^{\frac{2s+3}{8}}(0,T))$ is continuous for $s\ge -\frac{3}{2}$.  Similarly, the operator $H^s(\mathbb{R})\ni y_0\mapsto y_x\in C(\mathbb{R};H^{\frac{2s+1}{8}}(0,T))$ is continuous for $s\ge -\frac{1}{2}$.
%\end{rem}
\begin{proof} We prove this lemma by using arguments similar to the ones given in the proof of \cite[Theorem 4]{fokas}, which are based on the Fourier representation of the solution. There are a few crucial differences, particularly in the temporal regularity, compared to the classical Schrödinger equation due to the biharmonic evolution operator considered here.

Upon taking the Fourier transform of \eqref{w1.1}-\eqref{w1.2} in the spatial variable, we find
\begin{eqnarray}
\hat{y}(\xi,t)=e^{i\xi^{4}t}\hat{y}_{0}(\xi)\label{2.3}
\end{eqnarray} where $\hat{y}_{0}$ is the Fourier transform of $y_0$. Note that $$|\hat{y}(\xi,t)|=|e^{i\xi^{4}t}\hat{y}_{0}(\xi)|=|\hat{y}_0(\xi)|$$ for $\xi\in\mathbb{R}$, $t\in [0,T]$.  Therefore,
\begin{equation}\|y(t)\|_{H^s(\mathbb{R})}=\|y_0\|_{H^s(\mathbb{R})}\end{equation} and $y(t)\in H^s(\mathbb{R})$ for $t\in [0,T]$. Moreover, $t\rightarrow y(\cdot,t)$ is continuous from $[0,T]$ into $H^s(\mathbb{R}).$  In order to see this, let $t,t_n\in [0,T]$ be such that $t_n\rightarrow t$.  Then,
\begin{eqnarray*}
\|y(t_{n})-y(t)\|_{H^{s}(\mathbb{R})}^2=\int_{\mathbb{R}}(1+\xi^{2})^{s}|e^{i\xi^{4}t_{n}}-e^{i\xi^{4}t}|^{2}|\hat{y}_{0}(\xi)|^{2}d\xi,
\end{eqnarray*} where $$\displaystyle\lim_{n\rightarrow\infty}(1+\xi^{2})^{s}|e^{i\xi^{4}t_{n}}-e^{i\xi^{4}t}|^{2}|\hat{y}_{0}(\xi)|^{2}\rightarrow 0$$ for $\xi\in\mathbb{R}$ and  $$(1+\xi^{2})^{s}|e^{i\xi^{4}t_{n}}-e^{i\xi^{4}t}|^{2}|\hat{y}_{0}(\xi)|^{2}\le 4(1+\xi^{2})^{s}|\hat{y}_{0}(\xi)|^{2}$$ for $\xi\in\mathbb{R}.$ Note that the right hand side of the above inequality is a nonnegative integrable function since $$4\int_{\mathbb{R}}(1+\xi^{2})^{s}|\hat{y}_{0}(\xi)|^{2}d\xi=4\|y_0\|_{H^s(\mathbb{R})}^2<\infty.$$ Hence, by the dominated convergence theorem $\displaystyle\lim_{n\rightarrow \infty}\|y(t_{n})-y(t)\|_{H^{s}(\mathbb{R})}=0$. In other words, $y(t_n)\rightarrow y(t)$ in $H^s(\mathbb{R})$. Hence, we have just proved that $y\in C([0,T];H^s(\mathbb{R}))$.

In order to prove $y\in C(\mathbb{R};H^{\frac{2s+3}{8}}(0,T))$, we first write $y=y_1+y_2$, where $$y_1(x,t)\equiv \frac{1}{2\pi}\int_\mathbb{R}e^{i\xi x+i\xi^{4}t}\theta(\xi)\hat{y}_{0}(\xi)d\xi,$$
$$y_2(x,t)\equiv \frac{1}{2\pi}\int_\mathbb{R}e^{i\xi x+i\xi^{4}t}\left(1-\theta(\xi)\right)\hat{y}_{0}(\xi)d\xi,$$ and $\theta$ is a smooth cut-off function satisfying $\theta\equiv 1$ for $|\xi|\le 1$, $0\le \theta\le 1$ for $1<|\xi|<2$, and $\theta\equiv 0$ for $|\xi|\ge 2$. Taking the $j^{\text{th}}$ order time derivative of $y_1$ with $0\le j\le m$, using the definition of $\theta$, Cauchy-Schwarz inequality, and the definition of the Sobolev norm, one deduces that there exists a non-negative constant $c(s,m)\ge 0$ such that
\begin{eqnarray}
\|y_{1}(x)\|_{H^m(0,T)}\leq c(s,m)T^{\frac{1}{2}}\|y_{0}\|_{H^{s}(\mathbb{R})}\label{2.14},
\end{eqnarray}
at first for all $m\in\mathbb{N}$, and then by interpolation for all $m\ge 0$.

In order to obtain a similar estimate for $y_2$, we split it into two terms and write $y_2=y_{2,1}+y_{2,2}$,  where
$$y_{2,1}(x,t)\equiv \frac{1}{2\pi}\int_{-\infty}^{-1}e^{i\xi x+i\xi^{4}t}\left(1-\theta(\xi)\right)\hat{y}_{0}(\xi)d\xi$$ and
$$y_{2,2}(x,t)\equiv \frac{1}{2\pi}\int_1^\infty e^{i\xi x+i\xi^{4}t}\left(1-\theta(\xi)\right)\hat{y}_{0}(\xi)d\xi.$$  Let us first consider the integral given by $y_{2,1}$.  We change the variables in this integral by setting $\xi^4=-\tau$ and we define $\displaystyle z^{\frac{1}{4}}$ for $z\in \mathbb{R}_+$ to be the negative real number which is obtained by taking the argument of $z$ as $4\pi$. Then, $y_{2,1}$ can be rewritten as
$$y_{2,1}(x,t)= -\frac{1}{8\pi}\int_{-\infty}^{-1}e^{i(-\tau)^{\frac{1}{4}}x+i\tau t}\left(1-\theta\left((-\tau)^{\frac{1}{4}}\right)\right)\hat{y}_{0}\left((-\tau)^{\frac{1}{4}}\right)\frac{d\tau}{(-\tau)^{\frac{3}{4}}}.$$  The above formula can be thought of as the inverse (temporal) Fourier transform of the function
\[\displaystyle
\hat{y}_{2,1}(x,\tau):=\left\{
  \begin{array}{ll}
    0, &\tau\in[-1,\infty), \\
    -e^{i(-\tau)^{\frac{1}{4}}x}\left(1-\theta\left((-\tau)^{\frac{1}{4}}\right)\right)\hat{y}_{0}\left((-\tau)^{\frac{1}{4}}\right)\frac{1}{4(-\tau)^{\frac{3}{4}}}, &\tau\in(-\infty,-1).
  \end{array}
\right.
\] Note then,
\begin{multline}\label{timeest1}
   \|y_{2,1}(x,\cdot)\|_{H^{\frac{2s+3}{8}}(0,T)}^2\le \|y_{2,1}(x,\cdot)\|_{H^{\frac{2s+3}{8}}(\mathbb{R})}^2=\int_{\mathbb{R}}(1+\tau^2)^{\frac{2s+3}{8}}|\hat{y}_{2,1}(x,\tau)|^2d\tau \\
  = \int_{-\infty}^{-1}(1+\tau^2)^{\frac{2s+3}{8}}\left(1-\theta\left((-\tau)^{\frac{1}{4}}\right)\right)^2\left|\hat{y}_{0}\left((-\tau)^{\frac{1}{4}}\right)\right|^2\frac{1}{16(-\tau)^{\frac{3}{2}}}d\tau\\
  =-\int_{-\infty}^{-1}(1+\xi^8)^{\frac{2s+3}{8}}\left(1-\theta\left(\xi\right)\right)^2\left|\hat{y}_{0}\left(\xi\right)\right|^2\frac{1}{4\xi^3}d\xi
  \le \int_{-\infty}^{-1}(1+\xi^8)^{\frac{2s+3}{8}}\left|\hat{y}_{0}\left(\xi\right)\right|^2\frac{1}{4\xi^3}d\xi\\
  \lesssim \int_{\mathbb{R}}(1+\xi^2)^{s}\left|\hat{y}_{0}\left(\xi\right)\right|^2d\xi=\|y_0\|_{H^s(\mathbb{R})}^2.
\end{multline} The same estimate is also true for $y_{2,2}$ by similar arguments. Now, using these two estimates together with \eqref{2.14}, one obtains
$$\|y(x,\cdot)\|_{H^{\frac{2s+3}{8}}(0,T)}\le c_s(1+T^{\frac{1}{2}})\|y_{0}\|_{H^{s}(\mathbb{R})}$$ for $s\ge -\frac{3}{2}$. Continuity of the map $x\rightarrow y(x,\cdot)$ can be shown by using the dominated convergence theorem again.
Differentiating the problem \eqref{w1.1}-\eqref{w1.2} in $x$ and repeating the above analysis with initial data $y_0'\in H^{s-1}(\mathbb{R})$, we deduce that $y_x\in C(\mathbb{R};H^{\frac{2s+1}{8}}(0,T))$ if $s\ge -\frac{1}{2}$.
\end{proof}

\subsection{Boundary data-to-solution operator}\label{bdrtosol} In this section, we consider the biharmonic Schrödinger equation with zero initial datum and inhomogeneous Dirichlet-Neumann boundary inputs:
 \begin{align}
  &iz_{t}+\partial_x^4z=0, \quad (x,t)\in\mathbb{R}_+\times (0,T'),\label{3.1}\\
  &z(x,0)=0, \quad x\in\mathbb{R}_+,\label{3.2}\\
  &z(0,t)=h_{0}(t), \quad  t\in(0,T'),\label{3.3} \\
  &z_{x}(0,t)=h_{1}(t), \quad  t\in(0,T').\label{3.4}
  \end{align}
\subsubsection*{Compatibility conditions} In order for solutions to be continuous at the space-time corner point $(x,t)=(0,0)$, we find a set of necessary (compability) conditions based on the analysis of traces.  Suppose that $s\ge 0$, $h_0\in H^{\frac{2s+3}{8}}(\mathbb{R})$ and $h_1\in H^{\frac{2s+1}{8}}(\mathbb{R})$.  Note that if $s\in \left(\frac{1}{2},\frac{9}{2}\right)$, then $\frac{2s+3}{8}>\frac{1}{2}$, and therefore $h_0(0)$ is well-defined, but $z(0,0)=0$, and hence we must have $h_0(0)=0.$ If $s\in\left(\frac{9}{2},\frac{17}{2}\right)$, then $\frac{2s+3}{8}>\frac{3}{2}$, and therefore $h_0(0), h_0'(0)$ are both well-defined, but $z_x(0,0)=0$, and hence we must have $h_0'(0)=0$ in addition to $h_0(0)=0.$ More generally, if $s\in \left(\frac{1}{2}+4(j-1),\frac{1}{2}+4j\right)$ for some  $j\ge 1$, then using also the main equation, we deduce that $\partial_t^kh_0(0)=0$ is a necessary condition for all $0\le k\le j-1$.  By similar arguments, we deduce that if $s\in \left(\frac{3}{2}+4(l-1),\frac{3}{2}+4l\right)$ for some  $l\ge 1$, then $\partial_t^kh_1(0)=0$ is a necessary condition for all $0\le k\le l-1$.
\subsubsection*{Wellposedness}
We prove the following wellposedness result for the half-line problem \eqref{3.1}-\eqref{3.4}.
\begin{lem}\label{lem239}
  Let $s\ge 0$, $s\neq 4j+\frac{1}{2},\,4j+\frac{3}{2}$ for $j\in \mathbb{N}$, $h_0\in H^{\frac{2s+3}{8}}(\mathbb{R})$ and $h_1\in H^{\frac{2s+1}{8}}(\mathbb{R})$, both of which vanish for $t\in \mathbb{R}-(0,T')$ and satisfy the necessary compatibility conditions at $(x,t)=(0,0)$.  Then the solution of \eqref{3.1}-\eqref{3.4}, denoted $z(t)=S_{b}([h_0,h_1])(t)$, satisfies $z\in C([0,T'];H^s(\mathbb{R}_+))\cap C(\mathbb{R}_+;H^{\frac{2s+3}{8}}(0,T'))$ with the estimate
  \begin{multline}\sup_{t\in [0,T']}\|z(\cdot,t)\|_{H^s(\mathbb{R}_+)}+\sup_{x\in \mathbb{R}_+}\|z(x,\cdot)\|_{H^{\frac{2s+3}{8}}(0,T')}\\
  \lesssim \|h_0\|_{H^{\frac{2s+3}{8}}(\mathbb{R})}+\left(1+T'^{\frac{1}{2}}\right)\|h_1\|_{H^{\frac{2s+1}{8}}(\mathbb{R})}.\end{multline}
  Moreover, $z$ has the additional regularity $z_x\in C(\mathbb{R}_+;H^{\frac{2s+1}{8}}(0,T'))$.
\end{lem}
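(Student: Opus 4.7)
The plan is to exploit the representation formula \eqref{qDeformednew2} derived in the previous subsection. Specializing to $q_{0}\equiv 0$, $f\equiv 0$ eliminates the two full-line integrals as well as the interior source integrals, leaving $z(x,t)$ as a sum of four contour integrals over $\partial D_{1}^{+}\cup\partial D_{2}^{+}$. Because $h_{0}$ and $h_{1}$ have been extended by zero outside $(0,T')$, the quantities $G_{j}(-ik^{4},T')$ coincide with the temporal Fourier transforms $\widehat{h_{j}}(k^{4})$, and the four integrals involve respectively the pair $(k^{3}\widehat{h_{0}}(k^{4}), k^{2}\widehat{h_{1}}(k^{4}))$ against the kernel $e^{ikx+ik^{4}t}\,dk$.

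For the temporal estimate at a fixed $x\geq 0$, I would parametrize the four rays that make up $\partial D_{1}^{+}\cup\partial D_{2}^{+}$ as $k=re^{i\theta}$, $r\geq 0$, $\theta\in\{\pi/4,\pi/2,3\pi/4,\pi\}$, and perform the change of variable $\tau=\pm k^{4}=\pm r^{4}\in\mathbb{R}$ on each ray. This turns $e^{ik^{4}t}$ into the Fourier kernel $e^{\pm i\tau t}$ and supplies a Jacobian $d\tau/(4|\tau|^{3/4})$. Each ray integral thereby becomes an inverse temporal Fourier transform of an explicit function $\Psi_{j}(x,\tau)$ proportional to $\widehat{h_{j}}(\pm\tau)$ times a power of $|\tau|^{1/4}$ (coming from the interaction of the $k^{2}$ or $k^{3}$ premultiplier with the Jacobian) times $e^{ikx}$, whose modulus is $e^{-rx\sin\theta}\leq 1$ on every relevant ray. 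Plancherel then identifies $\|z(x,\cdot)\|_{H^{(2s+3)/8}(\mathbb{R})}^{2}$ with a weighted $L^{2}(\mathbb{R})$-integral of $\Psi_{j}$, and reversing the change of variable recognizes this as a weighted $L^{2}$-integral of $\widehat{h_{j}}$ comparable to the claimed Sobolev norm of $h_{j}$.

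The principal technical point, and what I expect to be the main obstacle, is that the Jacobian contributes $|\tau|^{-3/4}$ near $\tau=0$: this singularity is fully absorbed by the $k^{3}$ premultiplier of the $h_{0}$-terms, but only partially absorbed by the $k^{2}$ premultiplier of the $h_{1}$-terms, leaving an excess factor $|\tau|^{-1/2}$ in the integrand. To dispose of it I would use a smooth cut-off to split each $h_{1}$-integral into a high-frequency piece ($|\tau|\geq 1$), on which the weight inequality $(1+\tau^{2})^{(2s+3)/8}|\tau|^{-1/2}\lesssim (1+\tau^{2})^{(2s+1)/8}$ converts the excess into exactly the $H^{(2s+1)/8}$-norm of $h_{1}$, and a low-frequency piece ($|\tau|\leq 1$), on which the uniform bound $|\widehat{h_{1}}(\tau)|\leq \|h_{1}\|_{L^{1}(0,T')}\leq T'^{1/2}\|h_{1}\|_{L^{2}}\leq T'^{1/2}\|h_{1}\|_{H^{(2s+1)/8}}$ together with the integrability of $|\tau|^{-1/2}$ on $(0,1)$ produces exactly the factor $T'^{1/2}$ appearing in the statement. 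The $h_{0}$-contribution carries no such factor because its low-frequency piece is already integrable without invoking the compact support of $h_{0}$.

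The spatial estimate at fixed $t$ is treated analogously: on the oblique rays ($\theta=\pi/4,3\pi/4$) the factor $e^{ikx}$ decays like $e^{-rx/\sqrt{2}}$ in $x>0$, and on the imaginary-axis rays ($\theta=\pi/2$) it decays like $e^{-rx}$, so each integral is a Fourier/Laplace-type transform of an $L^{2}$-controlled function of $r$ whose $H^{s}(\mathbb{R}_{+})$-norm in $x$ is, via Plancherel and the substitution $\tau=\pm r^{4}$, bounded by the same right-hand side. Continuity of $t\mapsto z(\cdot,t)$ into $H^{s}(\mathbb{R}_{+})$ and of $x\mapsto z(x,\cdot)$ into $H^{(2s+3)/8}(0,T')$ follows by dominated convergence applied to the ray parametrizations, as in the proof of Lemma \ref{Wrty0}. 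Finally, $z_{x}\in C(\mathbb{R}_{+};H^{(2s+1)/8}(0,T'))$ is obtained by differentiating the representation formula under the integral sign, which inserts an extra factor $ik$ inside every ray integrand; since $|k|\sim |\tau|^{1/4}$ under the substitution, this shifts the temporal regularity index by exactly $1/4=(2s+3)/8-(2s+1)/8$, and the temporal argument above goes through unchanged.
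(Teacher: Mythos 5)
Your proposal follows essentially the same route as the paper: specialize the Fokas representation formula to zero initial and interior data, parametrize the four rays of $\partial D_1^+\cup\partial D_2^+$, substitute $\tau=\pm k^4$, and control the resulting temporal Fourier transforms by Plancherel (and the Laplace-transform $L^2$ bound for the spatial estimate), with the Jacobian excess on the $h_1$-terms handled by exactly the same low/high frequency cutoff. The only difference is cosmetic: for the low-frequency piece the paper bounds the time derivatives pointwise via Cauchy--Schwarz in $\tau$ and integrates over $(0,T')$ to produce the $T'^{1/2}$, whereas you extract it from $|\widehat{h_1}(\tau)|\le\|h_1\|_{L^1(0,T')}\le T'^{1/2}\|h_1\|_{L^2}$; both are valid.
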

\begin{proof}
Using the representation formula \eqref{qDeformednew2}, the solution of \eqref{3.1}-\eqref{3.4} is given by
  \begin{multline}\label{zsol}
{z}(x,t)=-\frac{1}{\pi}\int_{\partial D_1^+} e^{ikx+ik^4t}\left[k^3(1-i)H_0(-ik^4,T')+k^2\left(1-i\right)H_1(-ik^4,T')\right]dk\\
-\frac{1}{\pi}\int_{\partial D_2^+} e^{ikx+ik^4t}\left[k^3(1+i)H_0(-ik^4,T')-k^2(1+i)H_1(-ik^4,T')\right]dk,
\end{multline} where $H_0$ and $H_1$ are defined in terms of the formula \eqref{Gj} with data $h_0$ and $h_1$, respectively.
\subsubsection*{Space estimate}
Note that $\partial D_1^+ = \gamma_1\cup \gamma_2$ and $\partial D_2^+ = \gamma_3\cup \gamma_4$ (see Figure \ref{gammas}), where
\begin{eqnarray*}
\gamma_{1}(k):=k+ik, \ \ 0\leq k<\infty, \\
\gamma_{2}(k):=-ik, \ \ -\infty<k\leq 0, \\
\gamma_{3}(k):=-k+ik, \ \ 0\le k< \infty, \\
\gamma_{4}(k):=k, \ \ -\infty<k\leq 0. \\
\end{eqnarray*}

\begin{figure}[h]
  \centering
   \includegraphics[scale=.75]{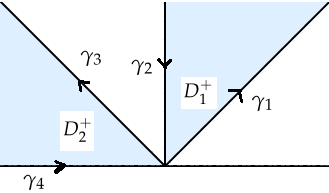}
  \caption{Partitioning the boundary}\label{gammas}
\end{figure}

Therefore, \eqref{zsol} can be rewritten as
  \begin{multline}\label{zsol2}
{z}(x,t)=-\frac{4}{\pi}\int_{0}^\infty e^{-kx+ikx-4ik^4t}\left[(i-1)k^3H_0(4ik^4,T')+ik^2H_1(4ik^4,T')\right]dk\\
+\frac{1-i}{\pi}\int_{0}^\infty e^{-kx+ik^4t}\left[k^3H_0(-ik^4,T')-ik^2H_1(-ik^4,T')\right]dk\\
+\frac{4}{\pi}\int_{0}^\infty e^{-kx-ikx-4ik^4t}\left[(1+i)k^3H_0(4ik^4,T')+ik^2H_1(4ik^4,T')\right]dk\\
+\frac{(1+i)}{\pi}\int_{0}^\infty e^{-ikx+ik^4t}\left[k^3H_0(-ik^4,T')+k^2H_1(-ik^4,T')\right]dk=:\sum_{j=1}^8z_j(x,t).
\end{multline}

We will start by considering the first component of \eqref{zsol2}, which is given by
\begin{equation}\label{z1}z_1(x,t)=-\frac{4}{\pi}\int_{0}^\infty e^{-kx+ikx-4ik^4t}\left[(i-1)k^3H_0(4ik^4,T')\right]dk.\end{equation}

%Recall that the norm of the Sobolev space $H^{s}(\mathbb{R}_+)$ can be defined by the formula (see e.g., \cite{Nezza12}):
%\begin{eqnarray}\label{Hsnorm}
%\|z\|_{H^{s}(\mathbb{R}_+)}:=\left[\|z\|_{L^2(\mathbb{R}_+)}^{2}+[z]_{H^s(\mathbb{R}_+)}^2\right]^{\frac{1}{2}}
%\end{eqnarray} for $s\in (0,1)$, where $[z]_{H^s(\mathbb{R}_+)}$ is a semi-norm given by $$[z]_{H^s(\mathbb{R}_+)}=\left[\int_{\mathbb{R}_+}\int_{\mathbb{R}_+}\frac{|z(x)-z(y)|^{2}}{|x-y|^{n+2s}}dxdy\right]^\frac{^1}{2}.$$ More generally, if $s>1$ and non-integer, then one writes $s=\floor{s}+\beta$ with $\beta\in (0,1)$ and defines the Sobolev norm on $H^s(\mathbb{R}_+)$ by setting \begin{eqnarray}\label{Hsnorm2}
%\|z\|_{H^{s}(\mathbb{R}_+)}:=\left[\|z\|_{H^{\floor{s}}(\mathbb{R}_+)}^2+\|D^{\floor{s}}z\|_{H^{\beta}(\mathbb{R}_+)}^2\right]^{\frac{1}{2}}.
%\end{eqnarray}
%
%We will use \eqref{Hsnorm} and \eqref{Hsnorm2} in order to estimate the $H^s(\mathbb{R}_+)$-norm of $z_1$.  The first part of the definition \eqref{Hsnorm} requires an estimation of the $L^2-$norm.

Let us first consider the case $s=0$.  In this case, by using the change of variables $\tau=-4k^4$, we get
\begin{multline}\label{z1est0}\|z_1(\cdot,t)\|_{L^2(\mathbb{R}_+)}^2=\frac{16}{\pi^2}\int_{0}^{\infty}\left|\int_{0}^\infty e^{-kx+ikx-4ik^4t}\left[(i-1)k^3H_0(4ik^4,T')\right]dk\right|^{2}dx\\
\le \frac{32}{\pi^2}\int_{0}^{\infty}\left(\int_{0}^\infty e^{-kx}k^3|H_0(4ik^4,T')|dk\right)^{2}dx\le \frac{32}{\pi}\int_{0}^\infty k^6|H_0(4ik^4,T')|^2dk\\
=\frac{32}{\pi}\int_{0}^\infty k^6|\hat{h}_0(-4k^4)|^2dk=\frac{1}{\sqrt{2}\pi}\int_{-\infty}^{0} (-\tau)^\frac{3}{4}|\hat{h}_0(\tau)|^2d\tau\\
\le \frac{1}{\sqrt{2}\pi}\int_{-\infty}^\infty \left(1+{\tau}^2\right)^\frac{3}{8}|\hat{h}_0(\tau)|^2d\tau,\end{multline} where the second inequality above is a property of the Laplace transform (see for example \cite[Lemma 3.2]{fokas}).  Taking the square root in \eqref{z1est0}, we obtain \begin{equation}\label{z1est1}
                     \|z_1(\cdot,t)\|_{L^2(\mathbb{R}_+)}\le \frac{1}{2^{\frac{1}{4}}\sqrt{\pi}}\|h_0\|_{H^{\frac{3}{8}}(\mathbb{R})}.
                   \end{equation}
Similarly,

\begin{multline}\label{z2est0}\|z_2(\cdot,t)\|_{L^2(\mathbb{R}_+)}^2=\frac{16}{\pi^2}\int_{0}^{\infty}\left|\int_{0}^\infty e^{-kx+ikx-4ik^4t}\left[ik^2H_1(4ik^4,T')\right]dk\right|^{2}dx\\
\le \frac{16}{\pi^2}\int_{0}^{\infty}\left(\int_{0}^\infty e^{-kx}k^2|H_1(4ik^4,T')|dk\right)^{2}dx\le \frac{16}{\pi}\int_{0}^\infty k^4|H_1(4ik^4,T')|^2dk\\
=\frac{16}{\pi}\int_{0}^\infty k^4|\hat{h}_1(-4k^4)|^2dk=\frac{1}{\sqrt{2}\pi}\int_{-\infty}^{0} (-\tau)^\frac{1}{4}|\hat{h}_1(\tau)|^2d\tau\\
\le \frac{1}{\sqrt{2}\pi}\int_{-\infty}^\infty \left(1+{\tau}^2\right)^\frac{1}{8}|\hat{h}_1(\tau)|^2d\tau.\end{multline} Taking the square root in \eqref{z2est0}, we obtain \begin{equation}\label{z2est1}
                     \|z_2(\cdot,t)\|_{L^2(\mathbb{R}_+)}\le \frac{1}{2^{\frac{1}{4}}\sqrt{\pi}}\|h_1\|_{H^{\frac{1}{8}}(\mathbb{R})}.
                   \end{equation}
%Now, suppose $s\in (0,1)$ and let us estimate $[z_1]_{H^s(\mathbb{R}_+)}$.  By the definition of this seminorm, we have
%\begin{multline}[z_1]_{H^s(\mathbb{R}_+)}^2=\int_{\mathbb{R}_+}\int_{\mathbb{R}_+}\frac{|z_1(x)-z_1(y)|^{2}}{|x-y|^{n+2s}}dxdy\\
%=\left(\frac{4}{\pi}\right)^2\int_{\mathbb{R}_+}\int_{\mathbb{R}_+}\frac{1}{|x-y|^{1+2s}}\left|\int_{0}^\infty \left(e^{-kx+ikx}-e^{-ky+iky}\right)e^{-4ik^4t}\left[(i-1)k^3H_0(4ik^4,T')\right]dk\right|^2dxdy.\end{multline}

Now, let $s=m\in \mathbb{Z}_+$ and $1\le j\le m$. Then, \begin{multline}\label{z1est2}\|\partial_x^jz_1(\cdot,t)\|_{L^2(\mathbb{R}_+)}^2=\frac{16}{\pi^2}\int_{0}^{\infty}\left|\int_{0}^\infty e^{-kx+ikx-4ik^4t}\left[(i-1)^{j+1}k^{3+j}H_0(4ik^4,T')\right]dk\right|^{2}dx\\
\le \frac{2^{j+5}}{\pi^2}\int_{0}^{\infty}\left(\int_{0}^\infty e^{-kx}k^{3+j}|H_0(4ik^4,T')|dk\right)^{2}dx\le \frac{2^{j+5}}{\pi}\int_{0}^\infty k^{2j+6}|H_0(4ik^4,T')|^2dk\\
=\frac{2^{j+5}}{\pi}\int_{0}^\infty k^{2j+6}|\hat{h}_0(-4k^4)|^2dk=\frac{1}{\sqrt{2}\pi}\int_{-\infty}^{0} (-\tau)^\frac{2j+3}{4}|\hat{h}_0(\tau)|^2d\tau\\
\le \frac{1}{\sqrt{2}\pi}\int_{-\infty}^\infty \left(1+{\tau}^2\right)^\frac{2j+3}{8}|\hat{h}_0(\tau)|^2d\tau,\end{multline} which gives
\begin{equation}\label{z1est3}
                     \|z_1(\cdot,t)\|_{H^m(\mathbb{R}_+)}\le \frac{1}{2^{\frac{1}{4}}\sqrt{\pi}}\|h_0\|_{H^{\frac{2m+3}{8}}(\mathbb{R})}.
                   \end{equation}
Similarly,
\begin{multline}\label{z2est2}\|\partial_x^jz_2(\cdot,t)\|_{L^2(\mathbb{R}_+)}^2=\frac{16}{\pi^2}\int_{0}^{\infty}\left|\int_{0}^\infty e^{-kx+ikx-4ik^4t}\left[(i-1)^{j}ik^{2+j}H_1(4ik^4,T')\right]dk\right|^{2}dx\\
\le \frac{2^{j+4}}{\pi^2}\int_{0}^{\infty}\left(\int_{0}^\infty e^{-kx}k^{2+j}|H_1(4ik^4,T')|dk\right)^{2}dx\le \frac{2^{j+4}}{\pi}\int_{0}^\infty k^{2j+4}|H_1(4ik^4,T')|^2dk\\
=\frac{2^{j+4}}{\pi}\int_{0}^\infty k^{2j+4}|\hat{h}_1(-4k^4)|^2dk=\frac{1}{\sqrt{2}\pi}\int_{-\infty}^{0} (-\tau)^\frac{2j+1}{4}|\hat{h}_1(\tau)|^2d\tau\\
\le \frac{1}{\sqrt{2}\pi}\int_{-\infty}^\infty \left(1+{\tau}^2\right)^\frac{2j+1}{8}|\hat{h}_1(\tau)|^2d\tau,\end{multline} which gives
\begin{equation}\label{z2est3}
                     \|z_2(\cdot,t)\|_{H^m(\mathbb{R}_+)}\le \frac{1}{2^{\frac{1}{4}}\sqrt{\pi}}\|h_1\|_{H^{\frac{2m+1}{8}}(\mathbb{R})}.
                   \end{equation}
Now, by interpolation between \eqref{z1est1} and \eqref{z1est3}, we obtain
\begin{equation}\label{z1est4}
                     \|z_1(\cdot,t)\|_{H^s(\mathbb{R}_+)}\lesssim\|h_0\|_{H^{\frac{2s+3}{8}}(\mathbb{R})}
                   \end{equation} for all $s\ge 0$, including the non-integer values of $s$. Similarly, by interpolating between \eqref{z2est1} and \eqref{z2est3}, we get \begin{equation}\label{z2est4}
                     \|z_2(\cdot,t)\|_{H^s(\mathbb{R}_+)}\lesssim\|h_1\|_{H^{\frac{2s+1}{8}}(\mathbb{R})}
                   \end{equation} for $s\ge 0$. Applying the same arguments also to $z_j$ for $3\le j\le 8$, we get
\begin{equation}\label{z34est4}
                     \|z_3(\cdot,t)\|_{H^s(\mathbb{R}_+)}\lesssim\|h_0\|_{H^{\frac{2s+3}{8}}(\mathbb{R})}\text{ and } \|z_4(\cdot,t)\|_{H^s(\mathbb{R}_+)}\lesssim\|h_1\|_{H^{\frac{2s+1}{8}}(\mathbb{R})},
                   \end{equation}
\begin{equation}\label{z56est4}
                     \|z_5(\cdot,t)\|_{H^s(\mathbb{R}_+)}\lesssim\|h_0\|_{H^{\frac{2s+3}{8}}(\mathbb{R})}\text{ and } \|z_6(\cdot,t)\|_{H^s(\mathbb{R}_+)}\lesssim\|h_1\|_{H^{\frac{2s+1}{8}}(\mathbb{R})},
                   \end{equation}
\begin{equation}\label{z78est4}
                     \|z_7(\cdot,t)\|_{H^s(\mathbb{R}_+)}\lesssim\|h_0\|_{H^{\frac{2s+3}{8}}(\mathbb{R})}\text{ and } \|z_8(\cdot,t)\|_{H^s(\mathbb{R}_+)}\lesssim\|h_1\|_{H^{\frac{2s+1}{8}}(\mathbb{R})}.
                   \end{equation}
Combining \eqref{z1est4}-\eqref{z78est4}, we deduce that
\begin{equation}\label{zCnt1}
  \|z(\cdot,t)\|_{H^s(\mathbb{R}_+)}\lesssim \|h_0\|_{H^{\frac{2s+3}{8}}(\mathbb{R})}+\|h_1\|_{H^{\frac{2s+1}{8}}(\mathbb{R})}.
\end{equation} Continuity in $t$ can easily be justified by means of the dominated convergence theorem. Therefore we just proved that $z\in C([0,T'];H^s(\mathbb{R}))$ under the given assumptions on $s$, $h_0$ and $h_1$.
\subsubsection*{Time estimate}
We can rewrite \eqref{zsol} as
  \begin{multline}\label{zsol3}
{z}(x,t)=-\frac{1}{\pi}\int_{0}^{(1+i)\infty} e^{ikx+ik^4t}\left[k^3(1-i)\hat{h}_0(k^4)+k^2\left(1-i\right)\hat{h}_1(k^4)\right]dk\\
-\frac{1}{\pi}\int_{i\infty}^{0} e^{ikx+ik^4t}\left[k^3(1-i)\hat{h}_0(k^4)+k^2\left(1-i\right)\hat{h}_1(k^4)\right]dk\\
-\frac{1}{\pi}\int_{0}^{(-1+i)\infty} e^{ikx+ik^4t}\left[k^3(1+i)\hat{h}_0(k^4)-k^2(1+i)\hat{h}_1(k^4)\right]dk\\
-\frac{1}{\pi}\int_{-\infty}^{0} e^{ikx+ik^4t}\left[k^3(1+i)\hat{h}_0(k^4)-k^2(1+i)\hat{h}_1(k^4)\right]dk=:\sum_{j=1}^8z_j(x,t).
\end{multline} We will start with considering the first component of \eqref{zsol3}, which is given by
$$z_1(x,t)=-\frac{1}{\pi}\int_{0}^{(1+i)\infty} e^{ikx+ik^4t}\left[k^3(1-i)\hat{h}_0(k^4)\right]dk.$$  By using the change of variables $\tau=k^4$, we get
$$z_1(x,t)=\frac{1-i}{4\pi}\int_{-\infty}^{0} e^{i\tau^\frac{1}{4}x+it\tau}\hat{h}_0(\tau)d\tau,$$ where $z^{\frac{1}{4}}$ for $z\in\mathbb{R}_{-}$ is defined by using the argument equal to $\pi$ so that $\tau^{\frac{1}{4}}\in \gamma_1$ for $\tau\in (-\infty,0)$.  Note that we can regard $z_1(x,\cdot)$ as the inverse Fourier transform of the function \[\displaystyle
\hat{z}_{1}(x,\tau):=\left\{
  \begin{array}{ll}
    0, &\tau\in[0,\infty), \\
    \frac{1}{2}e^{i\tau^\frac{1}{4}x}\left[(1-i)\hat{h}_0(\tau)\right], &\tau\in(-\infty,0),
  \end{array}
\right.
\] from which it follows that
\begin{multline}\label{z1257}\|z_1(x,\cdot)\|_{H^{\frac{2s+3}{8}}(0,T')}^2\le \|z_1(x,\cdot)\|_{H^{\frac{2s+3}{8}}(\mathbb{R})}^2\\
\le \frac{1}{2}\int_{\mathbb{R}}(1+\tau^2)^{\frac{2s+3}{8}}|\hat{h}_0(\tau)|^2d\tau=\frac{1}{2}\|h_0\|_{H^{\frac{2s+3}{8}}(\mathbb{R})}^2\end{multline} taking into account that $\text{Im}\left(\tau^{\frac{1}{4}}\right)\ge 0$ so that $|e^{i\tau^\frac{1}{4}x}|\le 1$.

Now, we will estimate the term
$$z_2(x,t)=-\frac{1}{\pi}\int_{0}^{(1+i)\infty} e^{ikx+ik^4t}\left[k^2\left(1-i\right)\hat{h}_1(k^4)\right]dk.$$

In order to do this, we write $z_2=z_{2,1}+z_{2,2}$ where
$$z_{2,1}(x,t)=-\frac{1}{\pi}\int_{0}^{(1+i)\infty}e^{ikx+ik^4t} \theta(k)\left[k^2\left(1-i\right)\hat{h}_1(k^4)\right]dk,$$
$$z_{2,2}(x,t)=-\frac{1}{\pi}\int_{0}^{(1+i)\infty}e^{ikx+ik^4t}\left(1-\theta(k)\right)\left[k^2\left(1-i\right)\hat{h}_1(k^4)\right]dk,$$ and $\theta$ is a smooth cut-off function satisfying $\theta\equiv 1$ for $|k|\le \sqrt{2}$, $0\le \theta\le 1$ for $\sqrt{2}<|k|<2\sqrt{2}$, and $\theta\equiv 0$ for $|k|\ge 2\sqrt{2}$ for $k\in \gamma_1$. By using the properties of $\theta$, we can rewrite $z_{2,1}$ as
$$z_{2,1}(x,t)=-\frac{1}{\pi}\int_{0}^{2(1+i)}e^{ikx+ik^4t}\theta(k)\left[k^2\left(1-i\right)\hat{h}_1(k^4)\right]dk.$$ Using the same change of variables $\tau=k^4$ as before, we get
$$z_{2,1}(x,t)=\frac{\left(1-i\right)}{4\pi}\int_{-2^6}^{0}e^{i\tau^{\frac{1}{4}}x+it\tau}\theta\left(\tau^{\frac{1}{4}}\right)\hat{h}_1(\tau)\frac{d\tau}{\tau^{\frac{1}{4}}}.$$
The $j^{\text{th}}$ order time derivative of $z_{2,1}(x,\cdot)$ is then written
$$\partial_t^jz_{2,1}(x,t)=\frac{\left(1-i\right)}{4\pi}\int_{-2^6}^{0}e^{i\tau^{\frac{1}{4}}x+it\tau}\theta\left(\tau^{\frac{1}{4}}\right)(i\tau)^j\hat{h}_1(\tau)\frac{d\tau}{\tau^{\frac{1}{4}}}.$$

We will estimate the above identity in two cases: (i) $s<\frac{3}{2}$, (ii) $s\ge \frac{3}{2}$. If $s<\frac{3}{2}$, then $\frac{2s+1}{8}<\frac{1}{2}$ and $2j-\frac{1}{2}-\frac{2s+1}{8}>-1$ for all $j\in \mathbb{N}$.  Therefore, by using the Cauchy-Schwarz inequality:
\begin{multline}
  |\partial_t^jz_{2,1}(x,t)|\le \frac{\sqrt{2}}{4\pi}\left(\int_{-2^6}^{0}(1+\tau^2)^{-\frac{2s+1}{8}}\tau^{2j-\frac{1}{2}}d\tau\right)^{\frac{1}{2}}
  \left(\int_{-2^6}^{0}(1+\tau^2)^{\frac{2s+1}{8}}|\hat{h}_1(\tau)|^2d\tau\right)^{\frac{1}{2}}\\
  \le \frac{\sqrt{2}}{4\pi}\left(\int_{-2^6}^{0}\tau^{2j-\frac{1}{2}-\frac{2s+1}{8}}d\tau\right)^{\frac{1}{2}}\|h_1\|_{H^{\frac{2s+1}{8}}(\mathbb{R})}=c_{j,s}\|h_1\|_{H^{\frac{2s+1}{8}}(\mathbb{R})}.
\end{multline}
Note that $c_{j,s}<\infty$ in the above estimate since $2j-\frac{1}{2}-\frac{2s+1}{8}>-1$. Now, consider the case $s\ge \frac{3}{2}$.  In this case, pick some $s'<3/2.$ As before, we have
$$|\partial_t^jz_{2,1}(x,t)|\le c_{j,s'}\|h_1\|_{H^{\frac{2s'+1}{8}}(\mathbb{R})}.$$ But since in particular $s'<s$, we have $\|h_1\|_{H^{\frac{2s'+1}{8}}(\mathbb{R})}\le \|h_1\|_{H^{\frac{2s+1}{8}}(\mathbb{R})}$.  Therefore, we deduce that $$|\partial_t^jz_{2,1}(x,t)|\le c_{j,s'}\|h_1\|_{H^{\frac{2s+1}{8}}(\mathbb{R})}.$$
From the definition of the Sobolev norm, one obtains
\begin{eqnarray}
\|z_{2,1}(x)\|_{H^m(0,T')}\leq c_{m,s'}T'^{\frac{1}{2}}\|h_1\|_{H^{\frac{2s+1}{8}}(\mathbb{R})}
\end{eqnarray}
at first for all $m\in\mathbb{N}$, and then by interpolation for all $m\ge 0$.  In particular, by choosing $m=\frac{2s+3}{8}$ one has
\begin{eqnarray}\label{z22est1}
\|z_{2,1}(x)\|_{H^{\frac{2s+3}{8}}(0,T')}\leq c_sT'^{\frac{1}{2}}\|h_1\|_{H^{\frac{2s+1}{8}}(\mathbb{R})}.
\end{eqnarray} for $s\ge -\frac{3}{2}$.

Now, let us estimate $z_{2,2}(x,\cdot)$. By using the definition of $\theta$, we can rewrite this function as
$$z_{2,2}(x,t)=-\frac{1}{\pi}\int_{1+i}^{(1+i)\infty}e^{ikx+ik^4t}\left(1-\theta(k)\right)\left[k^2\left(1-i\right)\hat{h}_1(k^4)\right]dk.$$ Using the same change of variables $\tau=k^4$ with argument of $\tau$ equal to $\pi$, we have
$$z_{2,2}(x,t)=\frac{\left(1-i\right)}{4\pi}\int_{-\infty}^{-4}e^{i\tau^{\frac{1}{4}}x+it\tau}\left(1-\theta\left(\tau^{\frac{1}{4}}\right)\right)\hat{h}_1(\tau)\frac{d\tau}{\tau^{\frac{1}{4}}}.$$
Note that, we can regard $z_{2,2}(x,\cdot)$ as the inverse Fourier transform of the function \[\displaystyle
\hat{z}_{2,2}(x,\tau):=\left\{
  \begin{array}{ll}
    0, &\tau\in[-4,\infty), \\
    \frac{1}{2}e^{i\tau^\frac{1}{4}x}\left(1-\theta\left(\tau^{\frac{1}{4}}\right)\right)(1-i)\hat{h}_1(\tau)\frac{1}{\tau^{\frac{1}{4}}}, &\tau\in(-\infty,-4),
  \end{array}
\right.
\] from which it follows that
\begin{multline}\label{z22est2}\|z_{2,2}(x,\cdot)\|_{H^{\frac{2s+3}{8}}(0,T')}^2\le \|z_{2,2}(x,\cdot)\|_{H^{\frac{2s+3}{8}}(\mathbb{R})}^2=\int_{\mathbb{R}}(1+\tau^2)^{\frac{2s+3}{8}}|\hat{z}_{2,2}(\tau)|^2d\tau\\
\le \frac{1}{2}\int_{-\infty}^{-4}(1+\tau^2)^{\frac{2s+3}{8}}|\tau|^{-\frac{1}{2}}|\hat{h}_1(\tau)|^2d\tau\lesssim\int_{\mathbb{R}}(1+\tau^2)^{\frac{2s+1}{8}}|\hat{h}_1(\tau)|^2d\tau=\|h_1\|_{H^{\frac{2s+1}{8}}(\mathbb{R})}^2.\end{multline}
Combining \eqref{z22est1} and \eqref{z22est2}, we conclude that
\begin{eqnarray}\label{z22est3}
\|z_{2}(x)\|_{H^{\frac{2s+3}{8}}(0,T')}\lesssim \left(1+T'^{\frac{1}{2}}\right)\|h_1\|_{H^{\frac{2s+1}{8}}(\mathbb{R})}.
\end{eqnarray}
Applying similar arguments that we used for $z_1$ and $z_2$ also to $z_j$ for $3\le j\le 8$ and combining the relevant estimates, we deduce that
\begin{eqnarray}\label{ztestson}
\|z(x)\|_{H^{\frac{2s+3}{8}}(0,T')}\lesssim \|h_0\|_{H^{\frac{2s+3}{8}}(\mathbb{R})}+\left(1+T'^{\frac{1}{2}}\right)\|h_1\|_{H^{\frac{2s+1}{8}}(\mathbb{R})}.
\end{eqnarray}
Continuity in $x$ can easily be justified by means of the dominated convergence theorem. Therefore we just proved that $z\in C(\mathbb{R}_+;H^{\frac{2s+3}{8}}(0,T'))$ under the given assumptions on $h_0$ and $h_1$. The time estimate for $z_x$ can be proven using arguments as in \eqref{z1257} after differentiating in $x$. Note that this will bring an extra factor of $|\tau|^{1/4}$ into the integrals in the estimates.  Therefore one will obtain $z_x\in C(\mathbb{R}_+;H^{\frac{2s+1}{8}}(0,T'))$.
\end{proof}
\subsection{Representation formula - revisited}Let $s\in \left[0,\frac{9}{2}\right)\setminus\left\{\frac{1}{2},\frac{3}{2}\right\}$.
Let $(\cdot)^*$ denote a bounded extension operator from $H^s(\mathbb{R}_+)$ into $H^s(\mathbb{R})$. Existence of such an extension operator is guaranteed by the definition of the Sobolev space and the associated Sobolev norm on the half-line:
$$H^s(\mathbb{R}_+)=\{\varphi:\mathbb{R}_+ \rightarrow \mathbb{C}\,|\,\exists \psi\in H^s(\mathbb{R}) \text{ s.t. } \psi|_{\mathbb{R}_+} = \varphi\},$$ where
$$\|\varphi\|_{H^s(\mathbb{R}_+)}=\inf_{\substack{{\psi\in H^s(\mathbb{R})} \\ {\psi|_{\mathbb{R}_+}} = \varphi}}\|\psi\|_{H^s(\mathbb{R})}.$$  Now, given $q_0\in H^s(\mathbb{R}_+)$, we let $q_0^*\in H^s(\mathbb{R})$ be the extension of $q$ with respect to the fixed extension operator $(\cdot)^*$ just defined. Note that by the boundedness of this operator we have $$\|q_0^*\|_{H^s(\mathbb{R})}\lesssim \|q_0\|_{H^s(\mathbb{R}^+)}.$$

Therefore, $y(t)=S_\mathbb{R}(t)q_0^*$ solves the problem
\begin{equation}\label{yext}
iy_t + \partial_x^4y=0,y(0,t)=q_0^*(x), x\in \mathbb{R}, t\in (0,T),
\end{equation}  where $S_\mathbb{R}(t)$ is the evolution operator for the free linear biharmonic Schrödinger equation given in Lemma \ref{Wrty0}.  Similarly, given $f\in L^1(0,T;H^s(\mathbb{R}_+))$, let $f^*\in L^1(0,T;H^s(\mathbb{R}))$ be the extension of $f$ in the spatial variable with respect to the extension operator $(\cdot)^*$. Then the solution of the non-homogeneous Cauchy problem
\begin{equation}\label{nonhomprob}
iw_t+\partial_x^4 w=f^*, w(x,0)=0, x\in \mathbb{R}, t\in (0,T)\end{equation} can be written as $$w(t)=-i\int_0^tS_\mathbb{R}(t-\tau)f^*(\tau)d\tau.$$

For $j=0,1$, we set $$a_j(t):=\left.\partial_x^jS_\mathbb{R}(t)q^*_0\right|_{x=0} \text{ and } b_j(t):=\left.-i\partial_x^j\int_0^tS_\mathbb{R}(t-\tau)f^*(\tau)d\tau\right|_{x=0}.$$ Note that these traces exist by Lemma \ref{Wrty0} as elements of $H^{\frac{2s+3}{8}}(0,T)$ for $j=0$ and $H^{\frac{2s+1}{8}}(0,T)$  for $j=1$. See also the analog argument for the classical Schrödinger equation given in \cite[Prop. 3.7 (iii)]{bona}.

\subsubsection*{Compatibility conditions} In order for solutions to be continuous at the space-time corner point $(x,t)=(0,0)$, we can find the necessary (compability) conditions based on the analysis of traces again.  Suppose that $s\in \left[0,\frac{9}{2}\right)\setminus\left\{\frac{1}{2},\frac{3}{2}\right\}$, $g_0\in H^{\frac{2s+3}{8}}(\mathbb{R})$ and $g_1\in H^{\frac{2s+1}{8}}(\mathbb{R})$.  Note that if $s\in \left(\frac{1}{2},\frac{9}{2}\right)$, then $\frac{2s+3}{8}>\frac{1}{2}$ and therefore $g_0(0)$ is well-defined, but $q(0,0)=q_0(0)$, and hence we must have $g_0(0)=q_0(0) (=a_0(0)).$ By a similar argument, we deduce that if $s\in \left(\frac{3}{2},\frac{9}{2}\right)$, then $g_1(0)=q_0'(0)(=a_1(0))$ is a necessary condition.

On the other hand, if $[p_0,p_1]\in H^{\frac{2s+3}{8}}(0,T)\times H^{\frac{2s+1}{8}}(0,T)$ such that $p_0(0)=p_1(0)=0$, then we set $[p_0,p_1]_e=[[p_0]_e,[p_1]_e]$ to be an extension of $[p_0,p_1]$ to $\mathbb{R}$, which satisfies the compact support condition $[p_0]_e|_{(0,2T)^c}=[p_1]_e|_{(0,2T)^c}=0$, regularity $[p_0,p_1]_e\in H^{\frac{2s+3}{8}}(\mathbb{R})\times H^{\frac{2s+1}{8}}(\mathbb{R})$, and the estimates
$$\|[p_0]_e\|_{H^{\frac{2s+3}{8}}(\mathbb{R})}\lesssim \|p_0\|_{H^{\frac{2s+3}{8}}(0,T)}\text{ and }\|[p_1]_e\|_{H^{\frac{2s+1}{8}}(\mathbb{R})}\lesssim \|p_1\|_{H^{\frac{2s+3}{8}}(0,T)}.$$ For the existence of such extension, see for instance \cite[Lemma 2.1]{BatalOzsari16}.

Therefore, if we define \begin{multline}\label{RepForm}q_e(t):=\left.S_\mathbb{R}(t)q_0^*\right|_{\mathbb{R}_+}-\left.i\int_0^tS_\mathbb{R}(t-\tau)f^*(\tau)d\tau\right|_{\mathbb{R}_+}\\
+S_{b}([g_0-a_0-b_0,g_1-a_1-b_1]_e)(t),\end{multline} then $q={q_e}|_{[0,T)}$ solves \eqref{1.1}-\eqref{1.4}. Note that both $g_0-a_0-b_0$ and $g_1-a_1-b_1$ satisfy the necessary comptability conditions given in Section \ref{bdrtosol} since $g_0(0)=a_0(0)$, $b_0(0)=b_1(0)=0$, and $g_1(0)=a_1(0)$.

Now, we are ready to state the following lemma which follows by combining the space-time estimates proved for the solution generators $S_\mathbb{R}$ and $S_b$ in the previous sections.
\begin{lem} Let $s\in \left[0,\frac{9}{2}\right)\setminus\left\{\frac{1}{2},\frac{3}{2}\right\}$, $q_0\in H^s(\mathbb{R}_+)$, $g_0\in H^{\frac{2s+3}{8}}(0,T)$, $g_1\in H^{\frac{2s+1}{8}}(0,T)$, and $f\in L^1(0,T;H^s(\mathbb{R}_+))$. Suppose also that the initial-boundary data satisfies the necessary compatibility conditions.  Then the following estimate holds true for the solution of \eqref{1.1}-\eqref{1.4}.
  \begin{multline}\|q\|_{C([0,T];H^s(\mathbb{R}_+))}\\
  \lesssim \|q_0\|_{H^s(\mathbb{R}_+)}+\|g_0\|_{H^{\frac{2s+3}{8}}(\mathbb{R}_+)}+\left(1+T^{\frac{1}{2}}\right)\left[\|g_1\|_{H^{\frac{2s+1}{8}}(\mathbb{R}_+)}+\|f\|_{L^1(0,T;H^s(\mathbb{R}_+))}\right].\end{multline}
\end{lem}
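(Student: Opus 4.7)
The plan is to exploit the representation formula \eqref{RepForm} which decomposes the solution $q$ into three pieces: the free evolution $S_\mathbb{R}(t)q_0^*$ of the extended initial datum, the Duhamel term $-i\int_0^t S_\mathbb{R}(t-\tau) f^*(\tau)\,d\tau$ coming from the extended forcing, and the boundary contribution $S_b([g_0-a_0-b_0, g_1-a_1-b_1]_e)$. By the triangle inequality it suffices to estimate each piece separately in $C([0,T];H^s(\mathbb{R}_+))$ and sum.

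For the first piece, I would invoke the conservation law \eqref{conservationlaw} from Lemma \ref{Wrty0} together with the boundedness of the extension operator to get $\|S_\mathbb{R}(t)q_0^*\|_{H^s(\mathbb{R}_+)}\le \|q_0^*\|_{H^s(\mathbb{R})}\lesssim \|q_0\|_{H^s(\mathbb{R}_+)}$, uniformly in $t\in[0,T]$. For the Duhamel term, Minkowski's inequality in $\tau$ combined with the same conservation law applied to $S_\mathbb{R}(t-\tau)f^*(\tau)$ at each slice gives the bound by $\|f^*\|_{L^1(0,T;H^s(\mathbb{R}))}\lesssim \|f\|_{L^1(0,T;H^s(\mathbb{R}_+))}$. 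Continuity in $t$ for both pieces follows as in the proof of Lemma \ref{Wrty0}, by dominated convergence applied to the Fourier multiplier $e^{i\xi^4 t}$ or via standard approximation arguments for the Bochner integral.

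For the boundary piece $S_b([g_0-a_0-b_0, g_1-a_1-b_1]_e)$, I would apply the wellposedness lemma for the boundary-data-to-solution operator to control this in $C([0,T];H^s(\mathbb{R}_+))$ by the norms of $g_0-a_0-b_0$ in $H^{\frac{2s+3}{8}}(\mathbb{R})$ and $g_1-a_1-b_1$ in $H^{\frac{2s+1}{8}}(\mathbb{R})$. Using the extension estimate $\|[\cdot]_e\|_{H^{\sigma}(\mathbb{R})}\lesssim \|\cdot\|_{H^\sigma(0,T)}$, the triangle inequality then reduces matters to bounding $g_j$ by the data, and $a_j, b_j$ by the initial datum and the forcing respectively. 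The trace bounds \eqref{timeest001}-\eqref{timeest002} in Lemma \ref{Wrty0} yield
\[
\|a_0\|_{H^{\frac{2s+3}{8}}(0,T)}+\|a_1\|_{H^{\frac{2s+1}{8}}(0,T)}\lesssim (1+T^{1/2})\|q_0\|_{H^s(\mathbb{R}_+)},
\]
and applying Minkowski in $\tau$ to the Duhamel formula for $b_j$ yields the analogous bound in terms of $\|f\|_{L^1(0,T;H^s(\mathbb{R}_+))}$, using that the time trace lemma provides a bound on $[0,T]$ uniform in the base point $x$.

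The main technical point to handle is that the boundary data fed into $S_b$ must satisfy the compatibility conditions from Section \ref{bdrtosol} in order for the boundary operator estimate to apply; this is already arranged in the definition of $q_e$ since $g_0(0)=a_0(0)=q_0(0)$ and $b_0(0)=0$ (and similarly for the first-order traces when $s>\tfrac{3}{2}$), so the differences $g_0-a_0-b_0$ and $g_1-a_1-b_1$ vanish at $t=0$ to the required order. Collecting the three contributions, absorbing the multiplicative $(1+T^{1/2})$ factors coming from the $a_j$ estimates into the stated prefactor on $\|g_1\|$ (or into the implicit constant), and noting that $\|q\|_{C([0,T];H^s(\mathbb{R}_+))}=\|q_e\|_{C([0,T];H^s(\mathbb{R}_+))}$ since $q=q_e|_{[0,T)}$, delivers the desired inequality. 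The only delicate bookkeeping is keeping track of the $T^{1/2}$ weights so they appear only in front of $\|g_1\|$ as stated; this will be a matter of inspection rather than real difficulty.
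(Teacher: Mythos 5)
Your proposal is correct and follows essentially the same route as the paper, which itself offers only the one-line justification that the lemma ``follows by combining the space-time estimates proved for the solution generators $S_\mathbb{R}$ and $S_b$'': you decompose via \eqref{RepForm}, apply the conservation law and trace estimates of Lemma \ref{Wrty0} to the free and Duhamel pieces (and to $a_j$, $b_j$), and apply the boundary-operator lemma together with the extension bounds to the $S_b$ piece, exactly as the paper does implicitly (and explicitly later in \eqref{firstest}--\eqref{pEst} for the nonlinear problem). The only bookkeeping caveat, which you already flag, is that the $(1+T^{1/2})$ factors arising from the $a_j$ and $b_j$ trace bounds must be absorbed into the implicit constant for the inequality to appear in the stated form.
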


\section{Nonlinear model}
\subsection{Local wellposedness for $s>\frac{1}{2}$}
In this section, we assume $s>\frac{1}{2}$ and consider the nonlinear model \eqref{4th.1}-\eqref{4th.4}.
\subsubsection*{Local existence}
Note that a solution of \eqref{4th.1}-\eqref{4th.4} is  a fixed point of the operator $\Psi$ which is given by
\begin{multline}\label{Psi}[\Psi(q)](t):=\left.S_\mathbb{R}(t)q^*_0\right|_{\mathbb{R}_+}-\left.i\int_0^tS_\mathbb{R}(t-\tau)f(q^*(\tau))d\tau\right|_{\mathbb{R}_+}\\
+S_{b}([g_0-a_0-b_0(q^*),g_1-a_1-b_1(q^*)]_e)(t),\end{multline} where
for $j=0,1$, we set $$a_j(t):=\left.\partial_x^jS_\mathbb{R}(t)q^*_0\right|_{x=0} \text{ and } b_j(q^*)(t):=\left.-i\partial_x^j\int_0^tS_\mathbb{R}(t-\tau)f(q^*(\tau))d\tau\right|_{x=0}.$$ We consider the operator $\Psi$ on the Banach space $X_{T_0}:= C([0,T_0];H^s(\mathbb{R}_+))$.  In order to prove the local existence of solutions we will use the Banach fixed point theorem on a closed ball $\overline{B}_R(0)$ of the function space $X_{T_0}$ for appropriately chosen $R>0$ and $T_0\in (0,T]$. Let us first show that $\Psi$ maps $\overline{B}_R(0)$ onto itself for appropriate $R$ and sufficiently small $T_0.$ First of all, we have via Lemma \ref{Wrty0} and the boundedness of the extension operator the estimate
\begin{equation}\label{firstest}
  \|S_\mathbb{R}(t)q^*_0\|_{H^s(\mathbb{R}_+)}\le \|S_\mathbb{R}(t)q^*_0\|_{H^s(\mathbb{R})}=\|q_0^*\|_{H^s(\mathbb{R})}\lesssim \|q_0\|_{H^s(\mathbb{R}_+)},
\end{equation} which gives $$\|S_\mathbb{R}(\cdot)q^*_0\|_{X_{T_0}}\lesssim  \|q_0\|_{H^s(\mathbb{R}_+)}.$$

Let us recall the following lemma, which holds true under the given assumptions (a1) and (a2).  The proof can be found for instance in \cite[Lemma 3.1]{BatalOzsari16}:
\begin{lem}\label{fEst} Let $f(y)=|y|^py$ and $s>\frac{1}{2}$. Then \begin{equation}\label{fEstLem1}\|f(y)\|_{H^s(\mathbb{R})}\lesssim\|y\|_{H^s(\mathbb{R})}^{p+1},\end{equation}
%\begin{equation}\label{fL2EstLem1}\|f(u)-f(v)\|_{L^2}\lesssim(\|u\|_{H^s}^{p}+\|v\|_{H^s}^{p})\|u-v\|_{L^2},\end{equation} and
\begin{equation}\|f(y)-f(z)\|_{H^s(\mathbb{R})}\lesssim (\|y\|_{H^s(\mathbb{R})}^{p}+\|z\|_{H^s(\mathbb{R})}^{p})\|y-z\|_{H^s(\mathbb{R})}\end{equation} for $y,z\in H^s(\mathbb{R})$.
\end{lem}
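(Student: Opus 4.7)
The plan is to reduce both estimates to the algebra property of $H^s(\mathbb{R})$, which holds for $s>\tfrac12$ by virtue of the Sobolev embedding $H^s(\mathbb{R})\hookrightarrow L^\infty(\mathbb{R})$, combined with a Moser-type (fractional chain rule) bound applied to the map $y\mapsto|y|^p y$. The hypotheses (a1)--(a2) on $p$ versus $s$ are exactly what is needed so that $|y|^p y$ admits enough classical or Hölder derivatives to be handled at Sobolev level $s$.

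For the first bound I would begin with $s=m\in\mathbb{N}$. Expanding $\partial^\alpha(|y|^p y)$ by the generalized Leibniz / Faà di Bruno formula produces a finite sum of products of at most $m$ derivatives of $y$ and $\bar y$ times factors of the form $|y|^{p-2\ell}$ with $0\le 2\ell\le m$. Assumption (a1) makes these factors admissible (the remaining exponent is non-negative, or the Hölder regularity of $|\cdot|^p$ at the origin matches what is required). Placing one factor in $L^2$ and the rest in $L^\infty$ and invoking $H^m\hookrightarrow L^\infty$ gives
\[
\|f(y)\|_{H^m(\mathbb{R})}\lesssim \|y\|_{L^\infty}^p\,\|y\|_{H^m(\mathbb{R})}\lesssim \|y\|_{H^m(\mathbb{R})}^{p+1}.
\]
For non-integer $s$ I would either interpolate between two consecutive integer cases permitted by (a2), or apply directly a Kato--Ponce / Christ--Weinstein fractional chain rule to $F(y)=|y|^p y$, whose $\lceil s\rceil$-th derivative has the required Hölder regularity thanks to (a2).

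For the Lipschitz bound I would use the fundamental theorem of calculus in Wirtinger form: with $w_\theta:=z+\theta(y-z)$,
\[
f(y)-f(z)=(y-z)\int_0^1\partial_yf(w_\theta)\,d\theta+\overline{(y-z)}\int_0^1\partial_{\bar y}f(w_\theta)\,d\theta,
\]
where $\partial_yf$ and $\partial_{\bar y}f$ are constant multiples of $|w|^p$. The $H^s$ algebra property then yields
\[
\|f(y)-f(z)\|_{H^s(\mathbb{R})}\lesssim \|y-z\|_{H^s(\mathbb{R})}\int_0^1\bigl\|\,|w_\theta|^p\,\bigr\|_{H^s(\mathbb{R})}\,d\theta,
\]
and applying the same Moser / fractional-chain-rule technique to $|w_\theta|^p$ together with $\|w_\theta\|_{H^s}\le\|y\|_{H^s}+\|z\|_{H^s}$ closes the argument.

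The principal obstacle is the bookkeeping: matching the limited smoothness of $y\mapsto|y|^p y$, which is only $C^{\lfloor p\rfloor,\,p-\lfloor p\rfloor}$ when $p$ is not an odd integer and is never analytic at the origin because of the $|\cdot|$, to the fractional Sobolev exponent $s$. The dichotomy expressed in (a1)--(a2) is precisely tuned so that the fractional chain rule closes without loss of derivatives, and verifying this in each of the four subcases ($s$ integer vs.\ non-integer crossed with $p$ odd-integer vs.\ non-integer) is the genuinely delicate point of the proof.
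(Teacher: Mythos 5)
The paper does not actually prove this lemma: it is stated as a recollection and the proof is outsourced to the reference \cite[Lemma 3.1]{BatalOzsari16}, so there is no internal argument to compare yours against. Your sketch follows the standard route that such references use --- the algebra property of $H^s(\mathbb{R})$ for $s>\tfrac12$, Moser/Leibniz expansion at integer levels, a fractional chain rule at non-integer levels, and the fundamental theorem of calculus in Wirtinger form for the difference estimate --- and in outline it is the right proof. Two points deserve correction or more care, though. First, you cannot obtain the non-integer case by ``interpolating between two consecutive integer cases'': the map $y\mapsto|y|^py$ is nonlinear, and real or complex interpolation of operators does not apply to it; the fractional chain rule (Christ--Weinstein/Kato--Ponce) that you mention as an alternative is in fact the only viable branch of that step, so it should be the main road rather than an option. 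Second, $\partial_{\bar y}f(w)$ is a constant multiple of $|w|^{p-2}w^{2}$, not of $|w|^{p}$; it has the same modulus, but the Moser-type bound $\bigl\|\,|w|^{p-2}w^{2}\,\bigr\|_{H^s}\lesssim\|w\|_{H^s}^{p}$ must be established for that function separately, and this is precisely where the difficulty concentrates: for $p$ small relative to $s$ the functions $|w|^{p}$ and $|w|^{p-2}w^{2}$ have derivatives that are singular at $w=0$, and the hypotheses (a1)--(a2) are exactly the conditions under which the chain rule still closes. You correctly identify this as the delicate point but leave it unverified; since it is the entire content of the lemma (everything else is soft), the proposal is an accurate road map rather than a complete proof.
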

Using Lemma \ref{fEst}, we have
\begin{multline}\left\|-i\int_0^tS_\mathbb{R}(t-\tau)f(q^*(\tau))d\tau\right\|_{X_{T_0}^s}\le \int_0^{T_0}\|f(q^*(\tau))\|_{H^s(\mathbb{R})}d\tau\\
\lesssim \int_0^{T_0}\|q^*(\tau)\|_{H^s(\mathbb{R})}^{p+1}d\tau\lesssim \int_0^{T_0}\|q(\tau)\|_{H^s(\mathbb{R}_+)}^{p+1}d\tau\le T_0\|q\|_{X_{T_0}}^{p+1}.\end{multline}

Similarly,
\begin{multline}\left\|-i\int_0^tS_\mathbb{R}(t-\tau)[f(y^*(\tau))-f(z^*(\tau))]d\tau\right\|_{X_{T_0}}\le \int_0^{T_0}\|f(y^*(\tau))-f(z^*(\tau))\|_{H^s(\mathbb{R})}d\tau\\
\lesssim \int_0^{T_0}(\|y^*(\tau)\|_{H^s(\mathbb{R})}^p+\|z^*(\tau)\|_{H^s(\mathbb{R})}^p)\|y^*(\tau)-z^*(\tau)\|_{H^s(\mathbb{R})}d\tau\\
\lesssim \int_0^{T_0}(\|y(\tau)\|_{H^s(\mathbb{R}_+)}^p+\|z(\tau)\|_{H^s(\mathbb{R}_+)}^p)\|y(\tau)-z(\tau)\|_{H^s(\mathbb{R}_+)}d\tau\\
\lesssim {T_0}(\|y\|_{X_{T_0}}^p+\|z\|_{X_{T_0}}^p)\|y-z\|_{X_{T_0}}.\end{multline}

The last term in \eqref{Psi} is estimated as follows
\begin{multline}\label{Wbhgp}
\|S_{b}([g_0-a_0-b_0(q^*),g_1-a_1-b_1(q^*)]_e)(t)\|_{X_{T_0}}\\
\lesssim\|[g_0-a_0-b_0(q^*)]_e\|_{H^{\frac{2s+3}{8}}(\mathbb{R})}+\left(1+T_0^\frac{1}{2}\right)\|[g_1-a_1-b_1(q^*)]_e\|_{H^{\frac{2s+1}{8}}(\mathbb{R})}\\
\lesssim \|g_0\|_{H^{\frac{2s+3}{8}}(0,T_0)}+\|a_0\|_{H^{\frac{2s+3}{8}}(0,T_0)}+\|b_0(q^*)\|_{H^{\frac{2s+3}{8}}(0,T_0)}\\
+\left(1+T_0^\frac{1}{2}\right)\left[\|g_1\|_{H^{\frac{2s+1}{8}}(0,T_0)}+\|a_1\|_{H^{\frac{2s+1}{8}}(0,T_0)}+\|b_1(q^*)\|_{H^{\frac{2s+1}{8}}(0,T_0)}\right].
\end{multline}

Note that \begin{multline}\label{gEst}\|a_1\|_{H^{\frac{2s+1}{8}}(0,T_0)}=\|\partial_x S_\mathbb{R}(t)q^*_0|_{x=0}\|_{H^{\frac{2s+1}{8}}(0,T_0)}\le \sup_{x\in\mathbb{R}_+}\|\partial_x S_\mathbb{R}(t)q^*_0\|_{H^{\frac{2s+1}{8}}(0,T_0)}\\
\le \left\|\frac{d}{dx}q_0^*\right\|_{H^{s-1}(\mathbb{R})}\le \|q_0^*\|_{H^{s}(\mathbb{R})}\lesssim \|q_0\|_{H^s(\mathbb{R}_+)}.\end{multline}

In \eqref{gEst}, the second inequality follows from  the fact that $\partial_x S_\mathbb{R}(t)q^*_0$ is a solution of the linear biharmonic Schrödinger equation on $\mathbb{R}$ with initial condition $\displaystyle\frac{d}{dx}q_0^*$.

Similarly,
\begin{multline}\label{pEst}\|b_1(q^*)\|_{H^{\frac{2s+1}{8}}(0,T_0)}=\left\|-i\partial_x\int_0^tS_\mathbb{R}(t-\tau)f(q^*(\tau))d\tau|_{x=0}\right\|_{H^{\frac{2s+1}{8}}(0,T_0)}\\
\le \sup_{x\in\mathbb{R}_+}\left\|-i\partial_x\int_0^tS_\mathbb{R}(t-\tau)f(q^*(\tau))d\tau\right\|_{H^{\frac{2s+1}{8}}(0,T_0)}\\
\lesssim \|\partial_xf(q^*)\|_{L^1(0,T_0;H^{s-1}(\mathbb{R}))}\le \|f(q^*)\|_{L^1(0,T_0;H^{s}(\mathbb{R}))}\lesssim T_0\|q\|_{X_{T_0}}^{p+1}\end{multline}

and \begin{equation}\|b_1(y^*)-b_1(z^*)\|_{H^{\frac{2s+1}{8}}(0,T_0)}\\
\lesssim T_0(\|y\|_{X_{T_0}}^p+\|z\|_{X_{T_0}}^p)\|y-z\|_{X_{T_0}}. \end{equation}

Combining above estimates, we obtain $$\|\Psi(q)\|_{X_{T_0}}\le C\left(\|q_0\|_{H^s(\mathbb{R}_+)}+\|g_0\|_{H^{\frac{2s+3}{8}}(0,T_0)}+\|g_1\|_{H^{\frac{2s+1}{8}}(0,T_0)}+T_0\|q\|_{X_{T_0}}^{p+1}\right).$$

Regarding the differences, again by above estimates, we have $$\|\Psi(y)-\Psi(z)\|_{X_{T_0}}\le C T_0(\|y\|_{X_{T_0}}^p+\|z\|_{X_{T_0}}^p)\|y-z\|_{X_{T_0}}.$$

Now let $A:=C\left(\|q_0\|_{H^s(\mathbb{R}_+)}+\|g_0\|_{H^{\frac{2s+3}{8}}(0,T_0)}+\|g_1\|_{H^{\frac{2s+1}{8}}(0,T_0)}\right)$, $R=2A$ and $T_0$ be small enough that $A+CT_0R^{p+1}<2A$.  Now, if necessary we can choose $T_0$ even smaller so that $\Psi$ becomes a contraction on $\overline{B}_R(0)\subset X_{T_0}$, which is a complete space.  Hence, $\Psi$ must have a unique fixed point in $\overline{B}_R(0)$ when we look for a solution whose lifespan is sufficiently small.
\subsubsection*{Uniqueness}\label{Uniq}Let $q_1,q_2\in X_{T_0}$ be two local solutions.  Then, \begin{multline}q_1(t)-q_2(t)=-i\int_0^tS_\mathbb{R}(t-s)[f(q_1^*(s))-f(q_2^*(s))]ds\\
+S_b(t)([b_0(q_2^*)-b_0(q_2^*),b_1(q_2^*)-b_1(q_2^*)]_e)\end{multline} for a.a. $t\in [0,T_0]$.

Since $s>1/2$,  \begin{multline}\|q_1(t)-q_2(t)\|_{H^s(\mathbb{R}_+)}\le C\int_0^{T_0}\|f(q_1^*(s))-f(q_2^*(s))\|_{H^s(\mathbb{R})}ds\\
+C\|b_0(q_2^*)-b_0(q_1^*)\|_{H^{\frac{2s+3}{8}}(0,T_0)}+C(1+T_0^\frac{1}{2})\|b_1(q_2^*)-b_1(q_1^*)\|_{H^{\frac{2s+1}{8}}(0,T_0)}\\
\le C(1+T_0^\frac{1}{2})\int_0^{T_0}\|q_1(s)-q_2(s)\|_{H^s(\mathbb{R}_+)}\left(\|q_1(s)\|_{H^s(\mathbb{R}_+)}^p+\|q_2(s)\|_{H^s(\mathbb{R}_+)}^p\right)ds\\
\le C(1+T_0^\frac{1}{2})\left(\|q_1(s)\|_{X_{T_0}^s}^p+\|q_2(s)\|_{X_{T_0}^s}^p\right)\int_0^{T_0}\|q_1(s)-q_2(s)\|_{H^s(\mathbb{R}_+)}ds.\end{multline}  Now, unleashing the Gronwall's inequality we get $\|q_1(t)-q_2(t)\|_{H^s(\mathbb{R}_+)}=0$, which implies $q_1\equiv q_2$ on $[0,T_0]$.

\subsubsection*{Continuous dependence}\label{ContDep}
Regarding continuous dependence on data, let $B$ be a bounded subset of $H^s(\mathbb{R}_+)\times H^{\frac{2s+3}{8}}(0,T_0)\times H^{\frac{2s+1}{8}}(0,T_0)$.
Let $(y_0, g_0,g_1)\in B$ and $(z_0,h_0,h_1)\in B$.  Let $y, z$ be two solutions on a common time interval $(0,T_0)$ corresponding to $(y_0, g_0,g_1)$ and$(z_0,h_0,h_1)$, respectively.  Then $w=y-z$ satisfies
\begin{equation}\label{StabilityModel} \left\{ \begin{array}{ll}
         i\partial_t w + \partial_x^4w = F(x,t)\equiv f(y)-f(z), & \mbox{$x\in \mathbb{R}_+$, $t\in (0,T_0)$},\\
         w(x,0)=w_0(x)\equiv (y_0-z_0)(x),\\
         w(0,t)=g(t)\equiv(g_0-h_0)(t),\\
         \partial_xw(0,t)=h(t)\equiv(g_1-h_1)(t).\end{array} \right.
         \end{equation}
Now, using the linear theory together with the nonlinear $H^s$ estimates on the differences, we have
$$\|w\|_{X_{T_0}}\le C\left(\|w_0\|_{H^s(\mathbb{R}_+)}+\|g\|_{H^{\frac{2s+3}{8}}(0,T_0)}+(1+T_0^{\frac{1}{2}})\|h\|_{H^{\frac{2s+1}{8}}(0,T_0)}+\|F\|_{L^1(0,T_0;H^s(\mathbb{R}_+))}\right),$$
where $$\|F\|_{L^1(0,T_0;H^s(\mathbb{R}_+))}\le CT_0\left(\|y\|_{X_{T_0}}^p+\|z\|_{X_{T_0}}^p\right)\|y-z\|_{X_{T_0}}.$$

Choosing $R$, as in the proof of the local existence, and $T_0$ accordingly small enough, we obtain
\begin{equation}\label{ContDep01}\|y-z\|_{X_{T_0}}\le C\left(\|y_0-z_0\|_{H^s(\mathbb{R}_+)}+\|g_0-h_0\|_{H^{\frac{2s+3}{8}}(0,T_0)}+\|g_1-h_1\|_{H^{\frac{2s+1}{8}}(0,T_0)}\right).\end{equation}

\subsubsection*{Blow-up alternative}\label{BlowSec} In this section, we want to obtain a condition which guarantees that a given local solution on $[0,T_0]$ can be extended globally.  Let's consider the set $S$ of all $T_0\in (0,T]$ such that there exists a unique local solution in $X_{T_0}^s$.  We claim that if $\displaystyle T_{max}:=\sup_{T_0\in S}T_0<T$, then $\displaystyle\lim_{t\uparrow T_{max}}\|q(t)\|_{H^s(\mathbb{R}_+)}=\infty.$  In order to prove the claim, assume to the contrary that $\displaystyle\lim_{t\uparrow T_{max}}\|q(t)\|_{H^s(\mathbb{R}_+)}\neq\infty.$  Then $\exists M$ and $t_n\in S$ such that $t_n\rightarrow T_{max}$ and $\|q(t_n)\|_{H^s(\mathbb{R}_+)}\le M.$  For a fixed $n$, we know that there is a unique local solution $q_1$ on $[0,t_n]$.  Now, we consider the following model.

\begin{equation}\label{MainProb3} \left\{ \begin{array}{ll}
         i\partial_t q + \partial_x^4q=f(q), & \mbox{$x\in \mathbb{R}_+$, $t\in (t_n,T)$},\\
         q(x,t_n)=q_1(x,t_n),\\
         q(0,t)=g_0(t),\\
         \partial_xq(0,t)=g_1(t).\end{array} \right.
         \end{equation}
We know from the local existence theory that the above model has a unique local solution $q_2$ on some interval $[t_n,t_n+\delta]$ for some $$\delta=\delta\left(M,\|g_0\|_{H^\frac{2s+3}{8}(0,T)},\|g_1\|_{H^\frac{2s+1}{8}(0,T)}\right)\in (0,T-t_n].$$  Now, choose $n$ sufficiently large that $t_n+\delta>T_{max}$.  If we set \begin{equation} q:=\left\{ \begin{array}{ll}
        q_1, & \mbox{$t\in [0,t_n)$},\\
         q_2, & \mbox{$t\in [t_n,t_n+\delta]$},\end{array} \right.\end{equation} then $q$ is a solution on $[0,t_n+\delta]$ where $t_n+\delta>T_{max}$, which is a contradiction.

Hence, the proof of the local wellposedness is complete.
\subsection{Strichartz estimates and low regularity}
In this section, we consider the nonlinear problem and we assume $0\le s< \frac{1}{2}$ and $(\lambda,r)$ is biharmonic admissible throughout.  We say that the pair $(\lambda,r)$ is \emph{biharmonic admissible} if $$\lambda,r\in [2,\infty] \text{ and }\frac{1}{8}=\frac{1}{4r}+\frac{1}{\lambda}.$$ We also assume that $$p\le \frac{8}{1-2s} \text{ (both \emph{subcritical} '<' and \emph{critical} '=' cases are considered)}.$$
In order to prove wellposedness in this low regularity setting, it is crucial to prove a Strichartz estimate (an estimate of the $L^\lambda_tW^{s,r}_x$ norm) on the solutions of the linear boundary value problem \eqref{3.1}-\eqref{3.4}.  These estimates are generally proven on the whole space or special domains such as manifolds without boundary.  It is in general more difficult to prove these estimates on domains with boundary. However, the representation formula on the half line obtained by the Fokas method provides a suitable kernel structure which will help us to prove the Strichartz estimate.

Note that in this case, we do not need any compatibility condition as the traces of the initial and boundary data are not defined for $s<\frac{1}{2}$.  The solution of \eqref{3.1}-\eqref{3.4} is given by the formula \eqref{zsol2}.  Let us consider the first term in this formula, which is given by \eqref{z1}.  We can rewrite \eqref{z1} as
\begin{multline}\label{z1r1}z_1(x,t)=-\frac{4}{\pi}\int_{0}^\infty e^{-kx+ikx-4ik^4t}\left[(i-1)k^3H_0(4ik^4,T')\right]dk\\
=-\frac{4}{\pi}\int_{0}^\infty e^{-kx+ikx-4ik^4t}\left[(i-1)k^3\int_0^{T'}e^{4ik^4s}h_0(s)ds\right]dk\\
=-\frac{4}{\pi}\int_{0}^\infty e^{-kx+ikx-4ik^4t}\left[(i-1)k^3\int_{-\infty}^{\infty}e^{4ik^4s}h_0(s)ds\right]dk\\
=\frac{4(1-i)}{\pi}\int_{0}^\infty e^{-kx+ikx-4ik^4t}k^3\hat{h}_0(-4k^4)dk\\
=\frac{4(1-i)}{\pi}\int_{-\infty}^\infty \psi(y)\left[\int_{0}^{\infty}e^{-kx+ikx-4ik^4t-iky}dk\right]dy,\end{multline} where $\hat{\psi}(k):=k^3\hat{h}_0(-4k^4)$ if $k\ge 0$ and $\hat{\psi}(k):=0$ otherwise.  We set $$c(x,y,t):=\frac{4(1-i)}{\pi}\int_{0}^{\infty}e^{-kx+ikx-4ik^4t-iky}dk=\frac{4(1-i)}{\pi t^{\frac{1}{4}}}\int_{0}^{\infty}e^{-kxt^{-\frac{1}{4}}+ikxt^{-\frac{1}{4}}-4ik^4-ikyt^{-\frac{1}{4}}}dk.$$ Then $z_1$ is rewritten as $$z_1(x,t)=\int_{-\infty}^\infty c(x,y,t)\psi(y)dy.$$  Let $$\phi(k;x,y,t):=kxt^{-\frac{1}{4}}-4k^4-kyt^{-\frac{1}{4}} \text{ and } p(k;x,t):=e^{-kxt^{-\frac{1}{4}}}.$$  Then, $$c(x,y,t):=\frac{4(1-i)}{\pi t^{\frac{1}{4}}}\int_{0}^{\infty}e^{i\phi(k;x,y,t)}p(k;x,t)dk.$$  Note that $\left|\frac{d^4}{dk^4}\phi(k;x,y,t)\right|=96\ge 1$, which allows us to apply the theory of oscillatory integrals:
\begin{lem}\cite[Section 1.4]{Lin15}
  Let $|\phi^{(4)}(k)|\ge 1$ for $k\ge 0$.  Then $$\left|\int_0^\infty e^{i\phi(k)}p(k)dk\right|\le c\left(\|p\|_{\infty}+\|p\|_1\right).$$
\end{lem}

Applying the above lemma to our kernel, we obtain the pointwise (uniform with respect to $x$ and $y$) estimate $$|c(x,y,t)|\lesssim t^{-\frac{1}{4}}\left(\|p(k;x,t)\|_{L_k^\infty(0,\infty)}+\left\|\frac{d}{dk}p(k;x,t)\right\|_{L_k^1(0,\infty)}\right)\lesssim t^{-\frac{1}{4}}.$$
It follows that \begin{equation}\label{z1int01}\|z_1(\cdot,t)\|_{L^\infty(\mathbb{R}_+)}\lesssim t^{-\frac{1}{4}}\|\psi\|_{L^1(\mathbb{R})}.\end{equation}  Moreover, from \eqref{z1est0}, we already have
\begin{equation}\label{z1int02}\|z_1(\cdot,t)\|_{L^2(\mathbb{R}_+)}\lesssim \|\psi\|_{L^2(\mathbb{R})}.\end{equation}  Interpolating between \eqref{z1int01} and \eqref{z1int02}, we get $$\|z_1(\cdot,t)\|_{L^r(\mathbb{R}_+)}\lesssim t^{-(\frac{1}{4}-\frac{1}{2r})}\|\psi\|_{L^{r'}(\mathbb{R})}.$$

Now, we will estimate $\|z_1\|_{L^\lambda(0,T';L^r(\mathbb{R}_+))}$. To this end, let $\eta\in C_c([0,T'];\mathcal{D}(\mathbb{R}_+))$ be an arbitrary test function.  Then, utilizing the admissibility of $(\lambda,r)$, we have
$$\left|(z_1,\eta)_{L^2(0,T';L^2(\mathbb{R}_+))}\right|\lesssim \|\psi\|_{L^2(\mathbb{R})}\|\eta\|_{L^{\lambda'}(0,T';L^{r'}(\mathbb{R}_+))},$$ from which it follows that \begin{equation}\label{z1int1}\|z_1\|_{L^\lambda(0,T';L^r(\mathbb{R}_+))}\lesssim \|\psi\|_{L^2(\mathbb{R})}\lesssim \|h_0\|_{H^{\frac{3}{8}}(\mathbb{R})}.\end{equation} The first estimate above uses the fact that $$\left\|\int_0^{T'}\int_{0}^\infty \overline{c(x,\cdot,t)}\eta(x,t)dxdt\right\|_{L^2(\mathbb{R})}\lesssim \|\eta\|_{L^{\lambda'}(0,{T'};L^{r'}(\mathbb{R}_+))},$$ which follows from same the arguments in \cite[pp. 25-26]{bona} with the exception that $(\lambda,r)$ satisfies now biharmonic admissibility condition compared to the case of classical Schrödinger equation.  Differentiating in the spatial variable as in \eqref{z1est2} and reapplying the above arguments to the derivative, one obtains \begin{equation}\label{z1int2}\|\partial_x z_1\|_{L^\lambda(0,{T'};L^r(\mathbb{R}_+))}\lesssim \|h_0\|_{H^{\frac{5}{8}}(\mathbb{R})}.\end{equation}  Interpolating between \eqref{z1int1} and \eqref{z1int2}, we get
$$\|z_1\|_{L^\lambda(0,{T'};W^{s,r}(\mathbb{R}_+))}\lesssim \|h_0\|_{H^{\frac{2s+3}{8}}(\mathbb{R})}.$$  Similar estimates can also be found for $z_i$, $i=\overline{2,8}$ in terms of either $\|h_0\|_{H^{\frac{2s+3}{8}}(\mathbb{R})}$ or $\|h_1\|_{H^{\frac{2s+1}{8}}(\mathbb{R})}$, and one obtains the desired Strichartz estimate for the boundary value problem
\begin{equation}\label{zStr1}
  \|z\|_{L^\lambda(0,{T'};W^{s,r}(\mathbb{R}_+))}\lesssim \|h_0\|_{H^{\frac{2s+3}{8}}(\mathbb{R})}+\|h_1\|_{H^{\frac{2s+1}{8}}(\mathbb{R})}.
\end{equation}
Let $y$ be a solution of \eqref{yext} and $w$ be a solution of \eqref{nonhomprob}.  It is well known that the following Strichartz estimates hold true on the whole space \cite{dinh16}:
%$$\|y\|_{L^\lambda(0,T;\dot{W}^{s,r})}\lesssim \|q_0^*\|_{\dot{H}^{s}},\quad \|y\|_{L^\lambda(0,T;{W}^{s,r})}\lesssim \|q_0^*\|_{{H}^{s}}$$ $$\|w\|_{C([0,T];H^s)}+\|w\|_{L^\lambda(0,T;\dot{W}^{s,r})} \lesssim \|f^*\|_{L^{\lambda'}(0,T;\dot{W}^{s,r'})},$$ and $$\|w\|_{C([0,T];H^s)}+\|w\|_{L^\lambda(0,T;{W}^{s,r}_x)} \lesssim \|f^*\|_{L^{\lambda'}(0,T;{W}^{s,r'})},$$ where $r<\infty$, $(\lambda,r)$ is biharmonic admissible.
%\begin{equation}\label{yestStr0}
%\|y\|_{L^\lambda(0,T;\dot{W}^{s,r})}\lesssim \|q_0^*\|_{\dot{H}^{s}},
%\end{equation}
\begin{equation}\label{yestStr}
\|y\|_{L^\lambda(0,T;{W}^{s,r}(\mathbb{R}))}\lesssim \|q_0^*\|_{{H}^{s}(\mathbb{R})},
\end{equation}
%\begin{equation}\label{westStr0}
%\|w\|_{C([0,T];H^s)}+\|w\|_{L^\lambda(0,T;\dot{W}^{s,r})} \lesssim \|f^*\|_{L^{\lambda'}(0,T;\dot{W}^{s,r'})},
%\end{equation} and
\begin{equation}\label{westStr}
\|w\|_{C([0,T];H^s(\mathbb{R}))}+\|w\|_{L^\lambda(0,T;{W}^{s,r}(\mathbb{R}))}
\lesssim \|f^*\|_{L^{\lambda'}(0,T;{W}^{s,r'}(\mathbb{R}))}.
\end{equation} Moreover, the arguments in the proof of \cite[Proposition 3.8]{audiard} together with the above Strichartz estimates give the following time trace estimates for the solution of the nonhomogeneous Cauchy problem:
\begin{equation}\label{westStr2}
\sup_{x\in\mathbb{R}}\|w(x,\cdot)\|_{H^{\frac{2s+3}{8}}(0,T)} + \sup_{x\in\mathbb{R}}\|\partial_x w(x,\cdot)\|_{H^{\frac{2s+1}{8}}(0,T)}\\
\lesssim \|f^*\|_{L^{\lambda'}(0,T;{W}^{s,r'}(\mathbb{R}))}.
\end{equation}
%Local wellposedness in the low regularity setting $s\in \left[0,\frac{1}{2}\right)$ will be proved for $p<\frac{8}{1-2s}$. In order to make the contraction argument work, we will work with a special biharmonic admissible pair given by $\lambda=\frac{8(p+2)}{p(1-2s)},r=\frac{p+2}{1+sp}$. Let $(m,n)$ be such that
%$\frac{1}{\lambda'}=\frac{1}{\lambda}+\frac{p}{m}$ and $\frac{1}{r'}=\frac{1}{r}+\frac{p}{n}$. By the Sobolev embedding theorem, one has $$\|u\|_{L^m(0,T;L^n)}\le T^{\gamma}\|u\|_{L^\lambda(0,T;\dot{W}^{s,r})}$$ for some $\gamma=\gamma({s,p})>0$. We set $$X_T:=\{u\in {C([0,T];H^s(\mathbb{R}_+))}\cap L^\lambda(0,T;{W}^{s,r}(\mathbb{R}_+))\, |\, \|u\|_{L^\lambda(0,T;\dot{W}^{s,r}(\mathbb{R}_+))}\le M\}.$$ Although $X_T$ is not a linear space (hence not Banach), it is still a complete metric space with the metric given by $d(u,v)=\|u-v\|_{L^\lambda(0,T;L^{r}(\mathbb{R}_+))}.$ Let $q\in X_T$ and $q^*$ be a regular extension as before from half-line to the whole line.
%
%We notice that the assumption $s<\frac{1}{2}$ relieves the issues related with the compatibility condition, therefore we can simply take $a_0=a_1=0$  since these traces are not defined as seen from the Strichartz estimates on the whole line for the linear homogeneous problem. The same argument also applies to $b_0$ and $b_1$ and we can also take them to be zero.
We set \begin{equation}X_{T_0}:=\left\{q\in L^\lambda(0,{T_0};{W}^{s,r}(\mathbb{R}_+))\right.\\
\left.\,|\,\|q\|_{L^\lambda(0,{T_0};{{W}}^{s,r}(\mathbb{R}_+))}\le M\right\}\end{equation}  and equip it with the metric
$$d(q_1,q_2):=\|q_1-q_2\|_{L^\lambda(0,{T_0};{L}^{r}(\mathbb{R}_+))}.$$

\begin{rem}It is well known that $X_{T_0}$ is a complete metric space.  Clearly, $X_{T_0}$ is not a linear space. We will still  write $$|q|_{X_{T_0}}:=\|q\|_{L^\lambda(0,{T_0};{{W}}^{s,r}(\mathbb{R}_+))}$$ to shorten the text in the remaining part of this section.
\end{rem}

%We will run the contraction argument on a subset of it given by $$X_{T_0}^M:=\{u\in X_{T_0}\, |\, \|u\|_{L^\lambda(0,{T_0};\dot{W}^{s,r}(\mathbb{R}_+))}\le M\}.$$ Note that $X_{T_0}^M$ is a complete metric space equipped with the metric $$d_{X_{T_0}^M}(q_1,q_2):=\|q_1-q_2\|_{L^\lambda(0,{T_0};{L}^{r}(\mathbb{R}_+))}.$$
Now consider again the operator $\Psi$ given in \eqref{Psi} on the metric space $(X_{T_0},d)$: \begin{equation}\label{Psiywz}
  \Psi(q)=y|_{\mathbb{R}_+}+w|_{\mathbb{R}_+}+z.
\end{equation}
We see from \eqref{yestStr} that \begin{equation}\label{ypsiest}
                                                        |y|_{\mathbb{R}_+}|_{X_{T_0}}\lesssim \|q_0\|_{H^s(\mathbb{R}_+)}.
                                                      \end{equation} Replacing $f^*$ by the nonlinear term $f(q^*)$ in \eqref{westStr}, with $(\cdot)^*$ denoting a fixed bounded extension operator from $H^s(\mathbb{R}_+)\cap {W}^{s,r}(\mathbb{R}_+))$ to $H^s \cap {W}^{s,r}$,  we get
\begin{equation}\label{wpsiest}|w|_{\mathbb{R}_+}|_{X_{T_0}}\lesssim \|f(q^*)\|_{L^{\lambda'}(0,T;{W}^{s,r'}(\mathbb{R}))}.\end{equation}
Moreover, we see from \eqref{zStr1}, \eqref{timeest001}, \eqref{timeest002}, and \eqref{westStr2} that
\begin{multline}\label{zXt0}
|z|_{X_{T_0}}\lesssim \|h_0\|_{H^{\frac{2s+3}{8}}(\mathbb{R})}+\|h_1\|_{H^{\frac{2s+1}{8}}(\mathbb{R})}\\
\lesssim \|g_0\|_{H^{\frac{2s+3}{8}}(0,T_0)}+\|g_1\|_{H^{\frac{2s+1}{8}}(0,T_0)}+c_s(1+T_0^\frac{1}{2})\|q_0\|_{H^s(\mathbb{R}_+)}+\|f(q^*)\|_{L^{\lambda'}(0,T_0;{W}^{s,r'}(\mathbb{R}))}.
\end{multline}
The nonlinear terms at the right hand sides of \eqref{wpsiest} and \eqref{zXt0} can be done by using the particular admissible pair given by $$\lambda:=\frac{8(p+2)}{p(1-2s)},r:=\frac{p+2}{1+sp}.$$ Indeed, one has the following estimates via the fractional Leibniz and chain rules (see the proof of \cite[Theorem 1.1]{Dinh18} for details):
\begin{equation}\label{fqst1}
  \|f(q^*)\|_{{L^{\lambda'}(0,T_0;{\dot{W}}^{s,r'}(\mathbb{R}))}}\lesssim T_0^\theta\|q^*\|_{{L^{\lambda}(0,T_0;{\dot{W}}^{s,r}(\mathbb{R}))}}^{p+1}\le T_0^\theta|q|_{X_{T_0}}^{p+1},
\end{equation}
\begin{equation}\label{fqst2}
  \|f(q^*)\|_{{L^{\lambda'}(0,T_0;{{L}}^{r'}(\mathbb{R}))}}\lesssim T_0^\theta\|q^*\|_{{L^{\lambda}(0,T_0;{\dot{W}}^{s,r}(\mathbb{R}))}}^{p}\|q^*\|_{{L^{\lambda}(0,T_0;{{L}}^{r}(\mathbb{R}))}}\le T_0^{\theta}|q|_{X_{T_0}}^{p+1},
\end{equation}
\begin{multline}\label{fqst3}
  \|f(q_1^*)-f(q_2^*)\|_{{L^{\lambda'}(0,T_0;{{L}}^{r'}(\mathbb{R}))}}\\
  \lesssim T_0^\theta\left(\|q_1^*\|_{{L^{\lambda}(0,T_0;{\dot{W}}^{s,r}(\mathbb{R}))}}^{p}+\|q_2^*\|_{{L^{\lambda}(0,T_0;{\dot{W}}^{s,r}(\mathbb{R}))}}^{p}\right)\|q_1^*-q_2^*\|_{{L^{\lambda}(0,T_0;{{L}}^{r}(\mathbb{R}))}}\\
  \le T_0^\theta\left(|q_1|_{X_{T_0}}^{p}+|q_2|_{X_{T_0}}^{p}\right)d(q_1,q_2),
\end{multline} where $\theta:=1-\frac{p(1-2s)}{8}.$  Hence, assuming $T_0$ is small, say $T_0<1$, there exists some $c>0$ (independent of initial-boundary data and $T_0$) such that
for any $q,q_1,q_2\in X_{T_0}$, one has
\begin{multline}\label{Psi1}
  |\Psi(q)|_{X_{T_0}}\le c(\|g_0\|_{H^{\frac{2s+3}{8}}(0,T_0)}+\|g_1\|_{H^{\frac{2s+1}{8}}(0,T_0)}+\|q_0\|_{H^s(\mathbb{R}_+)})
  +2T_0^\theta|q|_{X_{T_0}}^{p+1}\\
  \le  c(\|g_0\|_{H^{\frac{2s+3}{8}}(0,T_0)}+\|g_1\|_{H^{\frac{2s+1}{8}}(0,T_0)})+\frac{M}{2}+2T_0^\theta M^{p+1},
\end{multline} where $M:=2c\|q_0\|_{H^s(\mathbb{R}_+)}$. Similarly,
\begin{equation}\label{Psi2}
  d(\Psi(q_1),\Psi(q_2))\le cT_0^\theta\left(|q_1|_{X_{T_0}}^{p}+|q_2|_{X_{T_0}}^{p}\right)d(q_1,q_2) \le cT_0^\theta M^{p}d(q_1,q_2).
\end{equation}   In \eqref{Psi1}, the terms $c(\|g_0\|_{H^{\frac{2s+3}{8}}(0,T_0)}+\|g_1\|_{H^{\frac{2s+1}{8}}(0,T_0)})+2T_0^\theta M^{p+1}$ can be made smaller than $\frac{M}{2}$ by choosing $T_0$ small. Similarly, in \eqref{Psi2}, we can guarantee that the product $cT_0^\theta M^{p}<1$ if $T_0$ is chosen small enough.  Now, it is clear that $\Psi$ is a strict contraction on $(X_{T_0},d)$ for small $T_0$ in the subcritical case $\theta>0$ (equivalently if $p<\frac{8}{1-2s}$).

Regarding the critical case, where $\theta = 0$ (equivalently $p=\frac{8}{1-2s}$), we observe that the quantity $|\Psi(q)|_{X_{T_0}}$ is still finite and can be made as small as we wish by choosing $T_0$ small since $|\cdot|_{X_{T_0}}$ is monotone decreasing as $T_0$ decreases. Moreover, we can guarantee that $\Psi$ becomes a contraction on $(X_{T_0},d)$ since the sum $|q_1|_{X_{T_0}}^{p}+|q_2|_{X_{T_0}}^{p}$ can also be made small for small $T_0$.

It follows from \eqref{Psi2} that the uniqueness is guaranteed for small $T_0$, too.

The solution $q\in X_{T_0}$ obtained above in particular belongs to $C([0,T_0];H^s(\mathbb{R}_+))$. Indeed, since $q\in X_{T_0}$ is a fixed point of $\Psi$, we have in view of \eqref{Psiywz}
\begin{equation}\label{qywz}
  q=y|_{\mathbb{R}_+}+w|_{\mathbb{R}_+}+z.
\end{equation} The first term at the right hind side of \eqref{qywz} satisfies
$$\|y|_{\mathbb{R}_+}\|_{C([0,T_0];H^s(\mathbb{R}_+))}\lesssim \|q_0\|_{H^s(\mathbb{R}_+)}$$ as in \eqref{firstest}. The second term satisfies
$$\|w|_{\mathbb{R}_+}\|_{C([0,T_0];H^s(\mathbb{R}_+))}\le \|w\|_{C([0,T_0];H^s(\mathbb{R}))}\lesssim \|f(q^*)\|_{{L^{\lambda'}(0,T_0;{{W}}^{s,r'}(\mathbb{R}))}}\lesssim T_0^{\theta}|q|_{X_{T_0}}^{p+1}.$$ Moreover, from Lemma \ref{lem239} and the second inequality in \eqref{zXt0} , we have
\begin{multline}
  \|z\|_{C([0,T_0];H^s(\mathbb{R}_+))}\\
  \lesssim \|g_0\|_{H^{\frac{2s+3}{8}}(0,T_0)}+\|g_1\|_{H^{\frac{2s+1}{8}}(0,T_0)}+c_s(1+T_0^\frac{1}{2})\|q_0\|_{H^s(\mathbb{R}_+)}+\|f(q^*)\|_{L^{\lambda'}(0,T_0;{W}^{s,r'}(\mathbb{R}))}\\
  \lesssim \|g_0\|_{H^{\frac{2s+3}{8}}(0,T_0)}+\|g_1\|_{H^{\frac{2s+1}{8}}(0,T_0)}+c_s(1+T_0^\frac{1}{2})\|q_0\|_{H^s(\mathbb{R}_+)}+T_0^{\theta}|q|_{X_{T_0}}^{p+1}.
\end{multline}
This completes the proof of Theorem \ref{thmlowreg}.
%We also let $(m,n)$ be such that
%$$\frac{1}{\lambda'}=\frac{1}{\lambda}+\frac{p}{m}\text{ and } \frac{1}{r'}=\frac{1}{r}+\frac{p}{n}.$$  We first consider the subcritical case {$p<\frac{8}{1-2s}$}.  The fractional chain rule, fractional Leibniz rule, the choice of $m$ and $n$, and Sobolev embedding give
%\begin{equation}\label{fqst1}
%  \|f(q^*)\|_{L^{\lambda'}(0,T;{\dot{W}}^{s,r'})}\lesssim T_0^\theta \|q^*\|_{L^{\lambda}(0,T;{\dot{W}}^{s,r})}^{p+1},
%\end{equation}
%\begin{equation}\label{fqst2}
%  \|f(q_1^*)-f(q_2^*)\|_{L^{\lambda'}(0,T;{L}^{r'})}\lesssim T_0^\theta\left(\|q_2^*\|_{L^{\lambda}(0,T;{L}^{r})}^{p}+\|q_1^*\|_{L^{\lambda}(0,T;{L}^{r})}^{p}\right) \|q_1^*-q_2^*\|_{L^{\lambda}(0,T;{L}^{r})}.
%\end{equation}
%It follows from \eqref{fqst1} that
%\begin{equation}\label{psiest001}
%  \|\Psi(q^*)\|_{L^{\lambda}(0,T;{\dot{W}}^{s,r})}\lesssim \|q_0\|_{H^s(\mathbb{R}_+)} + \|g_0\|_{H^{\frac{2s+3}{8}}(0,T_0)}+\|g_1\|_{H^{\frac{2s+1}{8}}(0,T_0)}+T_0^\theta\|q^*\|_{L^{\lambda}(0,T;{\dot{W}}^{s,r})}^{p+1}.
%\end{equation}

\subsection{Global wellposedness} In this section, we assume $\kappa\in \mathbb{R}_{-}$.
First we multiply the main equation by $\overline{q}$, use integration by parts on $\mathbb{R}_+$, and take the imaginary parts. Then, we have
\begin{equation}\label{iden00}\frac{1}{2}\frac{d}{dt}\int_0^\infty|q|^2dx=\text{Im}\left[q_{xxx}(0,t)\bar{g}_0(t)\right]-\text{Im}\left[q_{xx}(0,t)\bar{g}_1(t)\right].\end{equation}
As we see from the above identity, the conservation of $L^2$-energy is lost in the presence of inhomogenenous boundary inputs.  Therefore, the global wellposedness is quite a nontrivial problem.  In order to prove the global wellposedness, one needs to gather some information on the second and third order traces.  This enforces us to use other multipliers of higher order.  To this end, we multiply the main equation by $\overline{q}_x$ and use integration by parts on $\mathbb{R}_+$.  Therefore, we first write
\begin{equation}\label{iden01}\int_{0}^\infty q_t\bar{q}_xdx-i\int_{0}^\infty \left(\partial_x^4q\right)\bar{q}_xdx=-i\kappa\int_{0}^\infty|q|^pq\bar{q}_xdx.\end{equation} We can rewrite the first term as
\begin{multline}\int_{0}^\infty q_t\bar{q}_xdx = \frac{d}{dt}\int_{0}^\infty q\bar{q}_{x}dx-\int_{0}^\infty q\bar{q}_{xt}dx\\
= \frac{d}{dt}\int_{0}^\infty q\bar{q}_{x}dx-\left(\int_{0}^\infty \partial_x\left(q\bar{q}_{t}\right)dx-\int_{0}^\infty q_x\bar{q}_{t}dx\right),\end{multline} from which it follows that
\begin{equation}\text{Im}\int_{0}^\infty q_t\bar{q}_xdx =\frac{1}{2}\frac{d}{dt}\text{Im}\int_{0}^\infty {q}\bar{q}_xdx+\frac{1}{2}\text{Im}\left[g_0(t)\bar{g}_0'(t)\right].\end{equation}
Considering the second term in \eqref{iden01}, we have
\begin{multline}\text{Im}\left[i\int_{0}^\infty \left(\partial_x^4q\right)\bar{q}_xdx\right]=\text{Re}\left[\int_{0}^\infty \left(\partial_x^4q\right)\bar{q}_xdx\right]\\
=-\text{Re}\left[q_{xxx}(0,t)\bar{q}_x(0,t)\right]-\frac{1}{2}\int_{0}^\infty \partial_x\left|{q}_{xx}\right|^2dx\\
=-\text{Re}\left[q_{xxx}(0,t)\bar{g}_1(t)\right]+\frac{1}{2}|q_{xx}(0,t)|^2.\end{multline}
Regarding the term at the right hand side of \eqref{iden01}, we have
\begin{equation}\text{Im}\left[i\kappa\int_{0}^\infty|q|^pq\bar{q}_xdx\right]
=\frac{\kappa}{p+2}\text{Re}\left[\int_{0}^\infty\partial_x\left(|q|^{p+2}\right)dx\right]
=-\frac{\kappa}{p+2}|g_0(t)|^{p+2}.\end{equation}
Combining the above, we have
\begin{multline}\label{combinediden01}
  \frac{1}{2}|q_{xx}(0,t)|^2=\frac{1}{2}\frac{d}{dt}\text{Im}\int_{0}^\infty {q}\bar{q}_xdx+\frac{1}{2}\text{Im}\left[g_0(t)\bar{g}_0'(t)\right]\\
  +\text{Re}\left[q_{xxx}(0,t)\bar{g}_1(t)\right]-\frac{\kappa}{p+2}|g_0(t)|^{p+2}.
\end{multline}
Now, we multiply the main equation by $\overline{q}_{t}$ and use integration by parts on $\mathbb{R}_+$.  Therefore, we have
\begin{multline}\label{iden00h2}\frac{d}{dt}\left[\frac{1}{2}\int_0^\infty|q_{xx}|^2dx-\frac{\kappa}{p+2}\int_0^\infty|q|^{p+2}dx\right]\\
=\text{Re}\left[q_{xxx}(0,t)\bar{g}'_0(t)\right]-\text{Re}\left[q_{xx}(0,t)\bar{g}_1'(t)\right].\end{multline}
Next, we multiply the main equation by $\overline{q}_{xxx}$ and use integration by parts on $\mathbb{R}_+$.  Therefore, we first write
\begin{equation}\label{3iden01}\int_{0}^\infty q_t\bar{q}_{xxx}dx-i\int_{0}^\infty \left(\partial_x^4q\right)\bar{q}_{xxx}dx=-i\kappa\int_{0}^\infty|q|^pq\bar{q}_{xxx}dx.\end{equation}
Taking the imarginary part of the first term at the left hand side of \eqref{3iden01} and using integration by parts,  we obtain
\begin{multline}\label{3iden2a}
  \text{Im}\int_{0}^\infty q_t\bar{q}_{xxx}dx = -\text{Im}\left[g_0'(t)\bar{q}_{xx}(0,t)\right ]-\text{Im}\int_{0}^\infty q_{tx}\bar{q}_{xx}dx\\
 = -\text{Im}\left[g_0'(t)\bar{q}_{xx}(0,t)\right ]+\text{Im}\left[g_1'(t)\bar{g}_1(t)\right ]+\text{Im}\int_{0}^\infty q_{txx}\bar{q}_{x}dx\\
 = -\text{Im}\left[g_0'(t)\bar{q}_{xx}(0,t)\right ]+\text{Im}\left[g_1'(t)\bar{g}_1(t)\right ]+\text{Im}\left[q_{xx}(0,t)\bar{g}'_0(t)\right ]\\
 +\text{Im}\int_{0}^\infty q_{xxx}\bar{q}_{t}dx+\frac{d}{dt}\text{Im}\int_{0}^\infty q_{xx}\bar{q}_xdx.
\end{multline}  But recall that $2\text{Im}z=\text{Im}z-\text{Im}\bar{z}$ for any $z\in \mathbb{C}$, and hence we get

\begin{multline}\label{3iden2a}
  \text{Im}\int_{0}^\infty q_t\bar{q}_{xxx}dx =
 = -\frac{1}{2}\text{Im}\left[g_0'(t)\bar{q}_{xx}(0,t)\right ]+\frac{1}{2}\text{Im}\left[g_1'(t)\bar{g}_1(t)\right ]\\+\frac{1}{2}\text{Im}\left[q_{xx}(0,t)\bar{g}'_0(t)\right ]+\frac{1}{2}\frac{d}{dt}\text{Im}\int_{0}^\infty q_{xx}\bar{q}_xdx.
\end{multline}
Considering the second term at the left hand side of \eqref{3iden01}, we have
\begin{multline}\label{3iden02b}
  \text{Im}\left[i\int_{0}^\infty \left(\partial_x^4q\right)\bar{q}_{xxx}dx\right] = \text{Re}\left[\int_{0}^\infty \left(\partial_x^4q\right)\bar{q}_{xxx}dx\right]\\
  =\frac{1}{2}\left[\int_{0}^\infty \partial_x\left(\left|{q}_{xxx}\right|^2\right)dxdt\right]=-\frac{1}{2}\left|{q}_{xxx}(0,t)\right|^2.
\end{multline}
Regarding the term at the right hand side of \eqref{3iden01}, we have
\begin{multline}\text{Im}\left[i\kappa\int_{0}^\infty|q|^pq\bar{q}_{xxx}dx\right]
=\kappa\text{Re}\left[\int_{0}^\infty|q|^pq\bar{q}_{xxx}dx\right]\\
=-\kappa\text{Re}\left[|g_0(t)|^pg_0(t)\bar{q}_{xx}(0,t)\right]\\
-\frac{\kappa(p+2)}{2}\text{Re}\left[\int_{0}^\infty|q|^pq_x\bar{q}_{xx} dx\right]-\frac{\kappa p}{2}\text{Re}\left[\int_{0}^\infty|q|^{p-2}q^2\bar{q}_x\bar{q}_{xx} dx\right].\end{multline}
Combining the three identities above,
\begin{multline}\label{combinediden02}
  \frac{1}{2}|q_{xxx}(0,t)|^2=\frac{1}{2}\text{Im}\left[g_0'(t)\bar{q}_{xx}(0,t)\right ]-\frac{1}{2}\text{Im}\left[g_1'(t)\bar{g}_1(t)\right ]-\frac{1}{2}\text{Im}\left[q_{xx}(0,t)\bar{g}'_0(t)\right ]\\
  -\frac{1}{2}\frac{d}{dt}\text{Im}\int_{0}^\infty q_{xx}\bar{q}_xdx+\kappa\text{Re}\left[|g_0(t)|^pg_0(t)\bar{q}_{xx}(0,t)\right]\\
+\frac{\kappa(p+2)}{2}\text{Re}\left[\int_{0}^\infty|q|^pq_x\bar{q}_{xx} dx\right]+\frac{\kappa p}{2}\text{Re}\left[\int_{0}^\infty|q|^{p-2}q^2\bar{q}_x\bar{q}_{xx} dx\right].
\end{multline}
\subsubsection*{Estimates} Using the $L^2$-identity \eqref{iden00} and the $H^2$-identity \eqref{iden00h2}, we have
\begin{equation}\label{Eest01}
  0\le E(t)\le E(0) + A(t)\|g_0\|_{H^1(0,t)} + B(t)\|g_1\|_{H^1(0,t)}\,,
\end{equation} where
$$E(t):=\frac{1}{2}\int_0^\infty|q|^2dx+\frac{1}{2}\int_0^\infty|q_{xx}|^2dx-\frac{\kappa}{p+2}\int_0^\infty|q|^{p+2}dx\,,$$
$$A(t):=\sqrt{\int_0^t|q_{xxx}(0,\tau)|^2d\tau}\text{ and } B(t):=\sqrt{\int_0^t|q_{xx}(0,\tau)|^2d\tau}.$$
Note that by integration by parts
\begin{equation}\label{H1est01}
  \int_0^\infty|q_x|^2dx = -q(0,t)\bar{q}_x(0,t)-\int_0^\infty q\bar{q}_{xx}dx,
\end{equation}from which it follows that
\begin{multline}\label{H1est02}
  \|q_x\|_{L^2(\mathbb{R}_+)}\le |g_0(t)|^\frac{1}{2}\cdot|g_1(t)|^\frac{1}{2}+\|q\|_{L^2(\mathbb{R}_+)}^\frac{1}{2}\|q_{xx}\|_{L^2(\mathbb{R}_+)}^\frac{1}{2}\\
  \le \|g_0\|_{L^\infty(0,t)}^\frac{1}{2}\|g_1\|_{L^\infty(0,t)}^\frac{1}{2}+\sqrt{E(t)}.
\end{multline}
Using this in \eqref{combinediden01}, we obtain
\begin{multline}\label{combinediden01est}
 B^2(t)\le\|q_0\|_{L^2(\mathbb{R}_+)}\|q_0'\|_{L^2(\mathbb{R}_+)}+\|q\|_{L^2(\mathbb{R}_+)}\|q_x\|_{L^2(\mathbb{R}_+)}+\|g_0\|_{H^1(0,t)}^2\\
  +2A(t)\|g_1\|_{L^2(0,t)}-\frac{2\kappa}{p+2}\|g_0\|_{L^{p+2}(0,t)}^{p+2}\\
  \le \|q_0\|_{H^1(\mathbb{R}_+)}^2+\|g_0\|_{H^1(0,t)}^2-\frac{2\kappa}{p+2}\|g_0\|_{L^{p+2}(0,t)}^{p+2}+\|g_0\|_{L^\infty(0,t)}\|g_1\|_{L^\infty(0,t)}\\
  +{3}{E(t)}+2A(t)\|g_1\|_{L^2(0,t)}\le  c+3E(t)+c A(t),
\end{multline} where $c$ is a nonnegative (generic) constant which might depend on $\kappa,p,$ and various norms of $q_0,g_0,$ and $g_1$.
On the other hand, using \eqref{combinediden02}, we obtain
\begin{multline}\label{combinediden02est}
 A^2(t)\le 2\|g_0'\|_{L^2(0,t)}B(t)+\|g_1\|_{H^1(0,t)}^2+\|q_0''\|_{L^2(\mathbb{R}_+)}\|q_0'\|_{L^2(\mathbb{R}_+)}\\
  +\|q_{xx}\|_{L^2(\mathbb{R}_+)}\|q_x\|_{L^2(\mathbb{R}_+)}+2|\kappa|\cdot\|g_0\|_{L^{2(p+1)}(0,t)}^{2(p+1)}\cdot B(t)\\
+{2(p+1)\int_0^t|\kappa|}\cdot\|q\|_{L^\infty(\mathbb{R}_+)}^p\|q_x\|_{L^2(\mathbb{R}_+)}\|q_{xx}\|_{L^2(\mathbb{R}_+)}ds\\
\le cB(t)+c+2E(t)+c\int_0^t\|q\|_{H^2(\mathbb{R}_+)}^{p+2}ds.
\end{multline}
We deduce from \eqref{H1est01} and the definition of $E(t)$ that $$\|q\|_{H^2(\mathbb{R}_+)}\le c+3\sqrt{2}\sqrt{E(t)}.$$  Using this in \eqref{combinediden02est}, we obtain
\begin{equation}\label{A2}A^2(t)\le cB(t)+c+2E(t)+c\int_0^tE(s)^{\frac{p+2}{2}}ds.\end{equation}
\eqref{Eest01} gives
\begin{equation}\label{son1}
  E^2(t)\le c+cA^2(t)+cB^2(t).
\end{equation}
From \eqref{combinediden01est}, we have
\begin{equation}\label{son2}B^2(t)\le {c}+cE(t)+cA(t).\end{equation}
Combining \eqref{son1} and \eqref{son2}, we obtain
\begin{equation}\label{son3}
  E^2(t)\le c+cA^2(t)+cE(t)+cA(t).
\end{equation}
Applying $\epsilon-$Young's inequality to the above estimate, we get
\begin{equation}\label{son4}
  E^2(t)\le c_\epsilon+\frac{c+\epsilon}{1-\epsilon} A^2(t).
\end{equation}
Using \eqref{son2} in \eqref{A2}, the assumption $p\le 2$, and $\epsilon$-Young's inequality, we get
\begin{equation}\label{son5}A^2(t)\le c_{\epsilon,p,T}+\frac{\epsilon}{1-\epsilon} E^2(t)+\frac{\epsilon c_{p}}{1-\epsilon} \int_0^tE^2(s)ds.\end{equation}
Inserting \eqref{son5} into \eqref{son4}, we can finally close the estimates as
$$\left(1-\frac{\epsilon(c+\epsilon)}{(1-\epsilon)^2}\right)E^2(t)\le c_{\epsilon,p,T}+\frac{\epsilon(c+\epsilon) c_{p}}{(1-\epsilon)^2} \int_0^tE^2(s)ds.$$
Fixing $\epsilon$ to be sufficiently small (so that the coefficients become positive) and unleashing the Gronwall's inequality, we obtain the uniform estimate $E(t)\le c_{\epsilon,p,T}$  for $t\in (0,T]$, which implies
that the solution is global at the $H^2-$level.
%\section{Global wellposedness}

\section*{Acknowledgements}
We would like to thank A.S. Fokas (University of Cambridge) for valuable suggestions and also for bringing to our attention reference \cite{dimakos} on the use of the Fokas method in the study of the biharmonic equation in the interior of a convex polygon.  We would like to finally express our gratitude to the anonymous referees whose valuable insights significantly improved the quality of this article.

\bibliographystyle{plain}
\bibliography{mybib}

\begin{thebibliography}{10}

\bibitem{Aksas17}
Belkacem Aksas and Salah-Eddine Rebiai.
\newblock Uniform stabilization of the fourth order {S}chr\"odinger equation.
\newblock {\em J. Math. Anal. Appl.}, 446(2):1794--1813, 2017.

\bibitem{aoki16}
Kazuki Aoki, Nakao Hayashi, and Pavel~I. Naumkin.
\newblock Global existence of small solutions for the fourth-order nonlinear
  {S}chr\"odinger equation.
\newblock {\em NoDEA Nonlinear Differential Equations Appl.}, 23(6):Art. 65,
  18, 2016.

\bibitem{audiard}
Corentin Audiard.
\newblock Global {S}trichartz estimates for the {S}chrödinger equation with
  nonzero boundary conditions and applications.
\newblock {\em Ann.Inst.Fourier}.

\bibitem{Fin2011}
G.~Baruch and G.~Fibich.
\newblock Singular solutions of the {$L^2$}-supercritical biharmonic nonlinear
  {S}chr\"odinger equation.
\newblock {\em Nonlinearity}, 24(6):1843--1859, 2011.

\bibitem{BatalOzsari16}
Ahmet Batal and T\"urker \"Ozsar\i.
\newblock Nonlinear {S}chr\"odinger equations on the half-line with nonlinear
  boundary conditions.
\newblock {\em Electron. J. Differential Equations}, pages Paper No. 222, 20,
  2016.

\bibitem{Ben2000}
Matania Ben-Artzi, Herbert Koch, and Jean-Claude Saut.
\newblock Dispersion estimates for fourth order {S}chr\"odinger equations.
\newblock {\em C. R. Acad. Sci. Paris S\'er. I Math.}, 330(2):87--92, 2000.

\bibitem{bona}
Jerry~L. Bona, Shu-Ming Sun, and Bing-Yu Zhang.
\newblock Nonhomogeneous boundary-value problems for one-dimensional nonlinear
  {S}chr\"odinger equations.
\newblock {\em J. Math. Pures Appl. (9)}, 109:1--66, 2018.

\bibitem{boul17}
Thomas Boulenger and Enno Lenzmann.
\newblock Blowup for biharmonic {NLS}.
\newblock {\em Ann. Sci. \'Ec. Norm. Sup\'er. (4)}, 50(3):503--544, 2017.

\bibitem{bu92}
Qiyue Bu.
\newblock On well-posedness of the forced nonlinear {S}chr\"odinger equation.
\newblock {\em Appl. Anal.}, 46(3-4):219--239, 1992.

\bibitem{Carroll}
Robert Carroll and Qiyue Bu.
\newblock Solution of the forced nonlinear {S}chr\"odinger ({NLS}) equation
  using {PDE} techniques.
\newblock {\em Appl. Anal.}, 41(1-4):33--51, 1991.

\bibitem{cui2007}
Shangbin Cui and Cuihua Guo.
\newblock Well-posedness of higher-order nonlinear {S}chr\"odinger equations in
  {S}obolev spaces {$H^s(\Bbb R^n)$} and applications.
\newblock {\em Nonlinear Anal.}, 67(3):687--707, 2007.

\bibitem{dimakos}
M.~Dimakos and A.~S. Fokas.
\newblock The {P}oisson and the biharmonic equations in the interior of a
  convex polygon.
\newblock {\em Stud. Appl. Math.}, 134(4):456--498, 2015.

\bibitem{dinh17}
Van~Duong Dinh.
\newblock On the focusing mass-critical nonlinear fourth-order {S}chr\"odinger
  equation below the energy space.
\newblock {\em Dyn. Partial Differ. Equ.}, 14(3):295--320, 2017.

\bibitem{Dinh18}
Van~Duong Dinh.
\newblock On well-posedness, regularity and ill-posedness for the nonlinear
  fourth-order {S}chr\"{o}dinger equation.
\newblock {\em Bull. Belg. Math. Soc. Simon Stevin}, 25(3):415--437, 2018.

\bibitem{dinh16}
Van~Duong Dinh.
\newblock Well-posedness of nolinear fractional {S}chr\" odinger and wave
  equations in sobolev spaces.
\newblock {\em Int. J. Appl. Math.}, 31(4):483--525, 2018.

\bibitem{Fin2002}
Gadi Fibich, Boaz Ilan, and George Papanicolaou.
\newblock Self-focusing with fourth-order dispersion.
\newblock {\em SIAM J. Appl. Math.}, 62(4):1437--1462, 2002.

\bibitem{fokas1}
A.~S. Fokas.
\newblock A unified transform method for solving linear and certain nonlinear
  {PDE}s.
\newblock {\em Proc. Roy. Soc. London Ser. A}, 453(1962):1411--1443, 1997.

\bibitem{fokasb}
Athanassios~S. Fokas.
\newblock {\em A unified approach to boundary value problems}, volume~78 of
  {\em CBMS-NSF Regional Conference Series in Applied Mathematics}.
\newblock Society for Industrial and Applied Mathematics (SIAM), Philadelphia,
  PA, 2008.

\bibitem{fokasKdV}
Athanassios~S. Fokas, A.~Alexandrou Himonas, and Dionyssios Mantzavinos.
\newblock The {K}orteweg--de {V}ries equation on the half-line.
\newblock {\em Nonlinearity}, 29(2):489--527, 2016.

\bibitem{fokas}
Athanassios~S. Fokas, A.~Alexandrou Himonas, and Dionyssios Mantzavinos.
\newblock The nonlinear {S}chr\"odinger equation on the half-line.
\newblock {\em Trans. Amer. Math. Soc.}, 369(1):681--709, 2017.

\bibitem{guo2010}
Cuihua Guo.
\newblock Global existence of solutions for a fourth-order nonlinear
  {S}chr\"odinger equation in {$n+1$} dimensions.
\newblock {\em Nonlinear Anal.}, 73(2):555--563, 2010.

\bibitem{guo12}
Cuihua Guo.
\newblock Global existence and asymptotic behavior of the {C}auchy problem for
  fourth-order {S}chr\"odinger equations with combined power-type
  nonlinearities.
\newblock {\em J. Math. Anal. Appl.}, 392(2):111--122, 2012.

\bibitem{Hao2006}
Chengchun Hao, Ling Hsiao, and Baoxiang Wang.
\newblock Wellposedness for the fourth order nonlinear {S}chr\"odinger
  equations.
\newblock {\em J. Math. Anal. Appl.}, 320(1):246--265, 2006.

\bibitem{Hao2007}
Chengchun Hao, Ling Hsiao, and Baoxiang Wang.
\newblock Well-posedness of {C}auchy problem for the fourth order nonlinear
  {S}chr\"odinger equations in multi-dimensional spaces.
\newblock {\em J. Math. Anal. Appl.}, 328(1):58--83, 2007.

\bibitem{Hayashi15-4}
Nakao Hayashi and Pavel~I. Naumkin.
\newblock Factorization technique for the fourth-order nonlinear
  {S}chr\"odinger equation.
\newblock {\em Z. Angew. Math. Phys.}, 66(5):2343--2377, 2015.

\bibitem{Hayashi15-2}
Nakao Hayashi and Pavel~I. Naumkin.
\newblock Global existence and asymptotic behavior of solutions to the
  fourth-order nonlinear {S}chr\"odinger equation in the critical case.
\newblock {\em Nonlinear Anal.}, 116:112--131, 2015.

\bibitem{Hayashi15}
Nakao Hayashi and Pavel~I. Naumkin.
\newblock Large time asymptotics for the fourth-order nonlinear {S}chr\"odinger
  equation.
\newblock {\em J. Differential Equations}, 258(3):880--905, 2015.

\bibitem{Hayashi15-3}
Nakao Hayashi and Pavel~I. Naumkin.
\newblock On the inhomogeneous fourth-order nonlinear {S}chr\"odinger equation.
\newblock {\em J. Math. Phys.}, 56(9):093502, 25, 2015.

\bibitem{himonas15}
A.~Alexandrou Himonas and Dionyssios Mantzavinos.
\newblock The ``good'' {B}oussinesq equation on the half-line.
\newblock {\em J. Differential Equations}, 258(9):3107--3160, 2015.

\bibitem{himarx1}
A.~Alexandrou Himonas and Dionyssios Mantzavinos.
\newblock Well-posedness of the nonlinear {S}chrödinger equation on the
  half-plane.
\newblock {\em arXiv:1810.02395}, 2018.

\bibitem{himarx2}
A.~Alexandrou Himonas, Dionyssios Mantzavinos, and Fangchi Yan.
\newblock Well-posedness of initial-boundary value problems for a
  reaction-diffusion equation.
\newblock {\em arXiv:1810.05322}, 2018.

\bibitem{holmer}
Justin Holmer.
\newblock The initial-boundary-value problem for the 1{D} nonlinear
  {S}chr\"odinger equation on the half-line.
\newblock {\em Differential Integral Equations}, 18(6):647--668, 2005.

\bibitem{Karpman96}
V.~I. Karpman.
\newblock Stabilization of soliton instabilities by higher-order dispersion:
  {F}ourth-order nonlinear {S}chrodinger-type equations.
\newblock {\em Phys. Rev. E}, 53(2):R1336--R1339, 1996.

\bibitem{Karpman2000}
V.~I. Karpman and A.~G. Shagalov.
\newblock Stability of solitons described by nonlinear {S}chr\"odinger-type
  equations with higher-order dispersion.
\newblock {\em Phys. D}, 144(1-2):194--210, 2000.

\bibitem{Lin15}
Felipe Linares and Gustavo Ponce.
\newblock {\em Introduction to nonlinear dispersive equations}.
\newblock Universitext. Springer, New York, second edition, 2015.

\bibitem{Miao2015}
Changxing Miao, Haigen Wu, and Junyong Zhang.
\newblock Scattering theory below energy for the cubic fourth-order
  {S}chr\"odinger equation.
\newblock {\em Math. Nachr.}, 288(7):798--823, 2015.

\bibitem{Miao2009}
Changxing Miao, Guixiang Xu, and Lifeng Zhao.
\newblock Global well-posedness and scattering for the focusing energy-critical
  nonlinear {S}chr\"odinger equations of fourth order in the radial case.
\newblock {\em J. Differential Equations}, 246(9):3715--3749, 2009.

\bibitem{Pausader07}
Benoit Pausader.
\newblock Global well-posedness for energy critical fourth-order
  {S}chr\"odinger equations in the radial case.
\newblock {\em Dyn. Partial Differ. Equ.}, 4(3):197--225, 2007.

\bibitem{Pausader07-2}
Benoit Pausader.
\newblock Scattering and the {L}evandosky-{S}trauss conjecture for fourth-order
  nonlinear wave equations.
\newblock {\em J. Differential Equations}, 241(2):237--278, 2007.

\bibitem{Pausader09}
Benoit Pausader.
\newblock The cubic fourth-order {S}chr\"odinger equation.
\newblock {\em J. Funct. Anal.}, 256(8):2473--2517, 2009.

\bibitem{Pausader10}
Benoit Pausader and Shuanglin Shao.
\newblock The mass-critical fourth-order {S}chr\"odinger equation in high
  dimensions.
\newblock {\em J. Hyperbolic Differ. Equ.}, 7(4):651--705, 2010.

\bibitem{Pausader13}
Benoit Pausader and Suxia Xia.
\newblock Scattering theory for the fourth-order {S}chr\"odinger equation in
  low dimensions.
\newblock {\em Nonlinearity}, 26(8):2175--2191, 2013.

\bibitem{Ruz16}
Michael Ruzhansky, Baoxiang Wang, and Hua Zhang.
\newblock Global well-posedness and scattering for the fourth order nonlinear
  {S}chr\"odinger equations with small data in modulation and {S}obolev spaces.
\newblock {\em J. Math. Pures Appl. (9)}, 105(1):31--65, 2016.

\bibitem{Seg04}
Jun-ichi Segata.
\newblock Remark on well-posedness for the fourth order nonlinear
  {S}chr\"odinger type equation.
\newblock {\em Proc. Amer. Math. Soc.}, 132(12):3559--3568, 2004.

\bibitem{Seg06}
Jun-ichi Segata.
\newblock Modified wave operators for the fourth-order non-linear
  {S}chr\"odinger-type equation with cubic non-linearity.
\newblock {\em Math. Methods Appl. Sci.}, 29(15):1785--1800, 2006.

\bibitem{Seg06-2}
Jun-ichi Segata and Akihiro Shimomura.
\newblock Asymptotics of solutions to the fourth order {S}chr\"odinger type
  equation with a dissipative nonlinearity.
\newblock {\em J. Math. Kyoto Univ.}, 46(2):439--456, 2006.

\bibitem{Wang2012}
Yuzhao Wang.
\newblock Nonlinear fourth-order {S}chr\"odinger equations with radial data.
\newblock {\em Nonlinear Anal.}, 75(4):2534--2541, 2012.

\bibitem{Wen16-2}
Ruili Wen and Shugen Chai.
\newblock Well-posedness and exact controllability of a fourth order
  {S}chr\"odinger equation with variable coefficients and {N}eumann boundary
  control and collocated observation.
\newblock {\em Electron. J. Differential Equations}, pages Paper No. 216, 17,
  2016.

\bibitem{Wen14}
Ruili Wen, Shugen Chai, and Bao-Zhu Guo.
\newblock Well-posedness and exact controllability of fourth order
  {S}chr\"odinger equation with boundary control and collocated observation.
\newblock {\em SIAM J. Control Optim.}, 52(1):365--396, 2014.

\bibitem{Wen16}
Ruili Wen, Shugen Chai, and Bao-Zhu Guo.
\newblock Well-posedness and exact controllability of fourth-order
  {S}chr\"odinger equation with hinged boundary control and collocated
  observation.
\newblock {\em Math. Control Signals Systems}, 28(3):Art. 22, 28, 2016.

\bibitem{Zhang2010}
Junyong Zhang and Jiqiang Zheng.
\newblock Energy critical fourth-order {S}chr\"odinger equations with
  subcritical perturbations.
\newblock {\em Nonlinear Anal.}, 73(4):1004--1014, 2010.

\bibitem{Zheng11}
Jiqiang Zheng.
\newblock Well-posedness for the fourth-order {S}chr\"odinger equations with
  quadratic nonlinearity.
\newblock {\em Adv. Differential Equations}, 16(5-6):467--486, 2011.

\bibitem{Zhu11}
Shihui Zhu, Han Yang, and Jian Zhang.
\newblock Blow-up of rough solutions to the fourth-order nonlinear
  {S}chr\"odinger equation.
\newblock {\em Nonlinear Anal.}, 74(17):6186--6201, 2011.

\bibitem{Zhul2010}
Shihui Zhu, Jian Zhang, and Han Yang.
\newblock Limiting profile of the blow-up solutions for the fourth-order
  nonlinear {S}chr\"odinger equation.
\newblock {\em Dyn. Partial Differ. Equ.}, 7(2):187--205, 2010.

\bibitem{Zhu11-2}
Shihui Zhu, Jian Zhang, and Han Yang.
\newblock Biharmonic nonlinear {S}chr\"odinger equation and the profile
  decomposition.
\newblock {\em Nonlinear Anal.}, 74(17):6244--6255, 2011.

\end{thebibliography}

\end{document}